\theoremstyle{definition}
\numberwithin{equation}{section}
\newcommand*{\N}{\mathbb{N}}
\newcommand*{\Z}{\mathbb{Z}}
\newcommand*{\R}{\mathbb{R}}
\newcommand*{\ta}{\tilde{a}}
\declaretheorem[
	name=Theorem,
	numberwithin=section
	]{thm}
\declaretheorem[
	name=Corollary,
	sibling=thm,
	]{cor}
\declaretheorem[
	name=Definition,
	style=definition,
	]{defin}
\declaretheorem[
	name=Remark,
	style=remark,
	numbered=no
	]{rem}
\numberwithin{equation}{section}
\begin{document}
\title[Dispersion of dilated lacunary sequences]
{THE DISPERSION OF DILATED LACUNARY SEQUENCES, WITH APPLICATIONS IN MULTIPLICATIVE DIOPHANTINE APPROXIMATION}

\author[E. Stefanescu]{Eduard Stefanescu}
\address{Institut f\"ur Analysis und Zahlentheorie, TU Graz, Steyrergasse 30, 8010 Graz, Austria}
\email{\href{mailto:eduard.stefanescu@tugraz.at}{eduard.stefanescu@tugraz.at}}

\subjclass[2020]{11J83; 11J70; 42A16; 28A78.}
\keywords{Number Theory, Dispersion, Littlewood Conjecture}
%\thanks{{\color{blue} Here the acknowledgments of any grants.}}

\begin{abstract}
Let $(a_n)_{n \in \mathbb{N}}$ be a Hadamard lacunary sequence. We give upper bounds for the maximal gap of the set of dilates $\{a_n \alpha\}_{n \leq N}$ modulo 1, in terms of $N$. For any lacunary sequence $(a_n)_{n \in \mathbb{N}}$ we prove the existence of a dilation factor $\alpha$ such that the maximal gap is of order at most $(\log N)/N$, and we prove that for Lebesgue almost all $\alpha$ the maximal gap is of order at most $(\log N)^{2+\varepsilon}/N$. The metric result is generalized to other measures satisfying a certain Fourier decay assumption. Both upper bounds are optimal up to a factor of logarithmic order, and the latter result improves a recent result of Chow and Technau.  Finally, we show that our result implies an improved upper bound in the inhomogeneous version of Littlewood's problem in multiplicative Diophantine approximation.
\end{abstract}

\maketitle

\section{Introduction}

In this paper we investigate the dispersion, that is the largest gap, in sets which arise as dilates of a lacunary sequence modulo one. Throughout this paper, $(a_n)_{n \in \mathbb{N}}$ denotes an increasing sequence of integers satisfying the Hadamard gap condition with some growth parameter $r>1$, that is, $a_{n} \geq r a_{n-1}$ for $n \geq 2$, and $\{a_n\}_{n  \leq N}$ denotes a finite initial segment of such a sequence.  For $\alpha \in [0,1]$ we write $G(\{\alpha a_n\}_{n\le N})$ for the \textit{maximal gap} between neighbouring fractional parts of dilates $\{\alpha a_n\}_{n\le N}$ on the unit torus. More precisely, let $\{\alpha \hat{a}_{n}\}_{n\le N}$ be the order statistics of $\{\alpha a_n\}_{n\le N}$, understood to be taken modulo one so that all numbers are elements of the unit torus. Then 
$$
0 \leq \{\alpha \hat{a}_1\} \le \{\alpha \hat{a}_2\} \le...\le \{\alpha \hat{a}_N\} < 1,
$$
and the largest gap is defined as 
$$G(\{\alpha a_n\}_{n\le N}):=\max\limits_{{2\le k\le N+1}}(\{\alpha \hat{a}_n\}- \{\alpha \hat{a}_{n-1})\},$$
where $\alpha \hat{a}_{N+1}:=1+\alpha \hat{a}_1$. For a given lacunary sequence $(a_n)_{n\in\N}$, we will show that we can always find an $\alpha$ such that 
\begin{equation} \label{eq_gap_1}
G(\{\alpha a_n\}_{n\le N})\ll_r\frac{\log(N)}{N}
\end{equation}
as $N \to \infty$. We also study the metric problem and show that for every $\varepsilon > 0
$, for Lebesgue almost all dilation factors $\alpha\in[0,1]$,
\begin{equation} \label{eq_gap_2}
G(\{\alpha a_n\}_{n\le N}) \ll  \frac{\log(N)^{2 + \varepsilon}}{N}
\end{equation}
as $N \to \infty$. A similar result with exponent 3 instead of 2 for the logarithmic term has been obtained recently by Chow and Technau \cite{chow2023dispersion}. We obtain an analogous result for more general measures, which allows for an application in the context of Littlewood's conjecture in multiplicative Diophantine approximation, where we can show that for $\varepsilon>0$ and $\eta,\zeta\in\R$ we have
$$n||\alpha n-\eta||\cdot||\beta n-\zeta||\leq \frac{\log(\log(n))^{2+\varepsilon}}{\log(n)}$$
for ``generic'' $\alpha$ and $ \beta$, thereby again quantitatively improving a result of Chow and Technau.  Here and throughout the paper, $\| \cdot \|$ denotes the distance to the nearest integer. More details on this application in Diophantine approximation (in particular the meaning of the word ``generic'' in the previous sentence) will be described in Section \ref{sec:little} at the end of this paper. \\

Dilates of lacunary sequences and lacunary function series have a long and rich history in analysis and number theory. It is a classical observation that lacunary trigonometric series (or, more generally, lacunary series of dilates of periodic functions) mimic the typical behavior of sequences of independent random variables; this is the core message of classical papers such as those by Salem and Zygmund \cite{sz1, sz2}, Kac \cite{kac}  and Gapo\v{s}kin \cite{gapo}, and is described in detail in the survey paper \cite{abt}. This analogy between lacunary systems and independent random variables also underpins the contents of the present paper, even if it is not explicitly emphasized. Another source for this paper is the study of gap properties of dilates of (not necessarily lacunary) integer sequences modulo one. This topic is discussed in its own right, for example, by Konyagin, Ruzsa and Schlag \cite{krs} and Rudnick \cite{rudnick}, and can be traced back to work of Kronecker, Sierpi\'nski, Weyl, and many others. One particularly interesting connection is with the study of correlation functions and neighbor spacings in the context of the famous Berry--Tabor conjecture from quantum chaology (see \cite{mark} for some background); in that setup, dilates of lacunary sequences have been proven to follow the random (``Poissonian) model for almost all dilation parameters, see for example Rudnick and Zaharescu \cite{rz}, Chaubey and Yesha \cite{cy} and Aistleitner, Baker, Technau and Yesha \cite{abty}. However, it should be noted that in the Berry--Tabor conjecture one is interested in the overall distributional structure of the system of gaps, while in this paper we are only interested in bounding the extremal (maximal) gaps.\\

Trivially, for every sequence $(a_n)_{n \in \mathbb{N}}$ and for every $\alpha$ one has $G(\{\alpha a_n\}_{n\le N}) \geq 1/N$. Thus the upper bound from \eqref{eq_gap_1} is optimal up to a logarithmic factor. The metric result in equation \eqref{eq_gap_2} should of course be compared with the corresponding result for the random case, where Devroye \cite{devroye} proved that for a sequence $(X_n)_{n \in \mathbb{N}}$ of i.i.d.\ random variables having uniform distribution on $[0,1]$, $G(\{X_n\}_{n\le N})$ is of typical order roughly $(\log N) / N$ (the result obtained by Devroye is much more precise, see \cite{devroye} for details). In accordance with these results for the random case, it is tempting to conjecture that for a lacunary sequence $(a_n)_{n \in \mathbb{N}}$ one always has 
$$
G(\{\alpha a_n\}_{n\le N}) \gg \frac{\log(N)^{1 -\varepsilon}}{N},
$$
as well as 
$$
G(\{\alpha a_n\}_{n\le N}) \ll \frac{\log(N)^{1+\varepsilon}}{N}
$$
for every $\varepsilon > 0$ and almost all $\alpha$. If this conjecture is indeed correct, then our upper bound \eqref{eq_gap_2} is one logarithmic factor away from optimality. The problem of the exact asymptotic order of the maximal gap of dilated lacunary sequences mod 1 remains open, in both setups that we study in this paper. We consider this to be a very interesting problem, since it pertains to the question how robust the analogy between i.i.d.\ random systems and dilated lacunary systems is in the specific context of very rare events (see in this context also \cite{agkpr}). We recall in passing that a paper of Peres and Schlag \cite{ps} also contains a (still unsolved) problem on gaps of dilates of lacunary sequences mod 1; their problem, however, is of a rather different nature than ours.

\section{Main Results} In this section we give the precise statement of all main results of this paper. The notation is explained later at the end of this section.\\

\begin{defin}[Lacunary sequence]
    A sequence $(a_n)_{n\in\N}\subset \R$ is called a \textit{lacunary sequence}, if for a fixed \textit{growth factor} $r>1$:
    $$a_{n+1}\ge ra_{n}$$
    for every $n\in\N.$
    Let $\mathfrak{N}\subseteq\N$ be finite. We say $\{a_n\}_{n\in\mathfrak{N}}:=\{a_n\in(a_n)_{n\in\N}|n\in\mathfrak{N}\}$ is a set of \textit{lacunary type} with growth factor $r>1$ if $(a_n)_{n\in\N}$ is a lacunary sequence with the same growth factor.
\end{defin}
The first main result gives an upper bound to the maximal gap of dilated sets of lacunary type, where the dilation parameter can depend on the cardinality of the set.
\begin{thm}\label{main}
 Let $\{a_n\}_{n\le N}$ be a set of lacunary type with growth factor $r>1$. Then
 %there exists an $N_0(r)\in\N$, such that 
 for all $N\in\N$ with $N\ge N_0(r)$, there exists an $\alpha\in\R$, such that the maximal gap of the set $\{\alpha a_n\}_{n\in N}$ is at most $\log(N)/N$ up to constants, i.e.
 $$G(\{\alpha a_n\}_{n\le N})\ll_r\frac{\log(N)}{N}.$$
\end{thm}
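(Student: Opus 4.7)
The plan is a probabilistic (covering / first-moment) argument: produce a good $\alpha$ by showing that the measure of bad $\alpha$ is strictly less than one. Fix a constant $C_0=C_0(r)$ to be determined and set $M:=\lceil N/(C_0\log N)\rceil$, partitioning $[0,1)$ into arcs $I_j:=[j/M,(j+1)/M)$ of length $\ell:=1/M \asymp C_0 \log(N)/N$. Let $B_j \subseteq [0,1]$ denote the set of $\alpha$ for which no $n \leq N$ satisfies $\{\alpha a_n\}\in I_j$; since integer dilations leave the values $\{\alpha a_n\}$ invariant we may restrict $\alpha$ to $[0,1]$. If I can prove $\sum_{j=0}^{M-1}|B_j|<1$, then any $\alpha \in [0,1] \setminus \bigcup_j B_j$ meets every arc, so the maximal gap is at most $2/M \ll_r \log(N)/N$.

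To bound $|B_j|$ I would work with the counting function $N_j(\alpha):=\#\{n\leq N : \{\alpha a_n\} \in I_j\}$, which has mean $\int_0^1 N_j(\alpha)\,d\alpha = N\ell \asymp C_0 \log N$. Because $B_j \subseteq \{\alpha : N_j(\alpha) < N\ell/2\}$, the main task is to establish a Bernstein-type concentration estimate
\[
\bigl|\{\alpha \in [0,1] : |N_j(\alpha) - N\ell| > N\ell/2\}\bigr| \leq C\exp(-c N \ell)
\]
for the bounded lacunary sum $N_j(\alpha)-N\ell = \sum_{n \leq N}(\mathbf{1}_{I_j}(\alpha a_n)-\ell)$. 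Such a bound yields $|B_j|\leq C N^{-cC_0}$ and hence $\sum_j |B_j| \leq C N^{1-cC_0}$, which is $<1$ as soon as $C_0 > 1/c$ and $N \geq N_0(r)$.

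The concentration inequality is the classical quasi-independence of Hadamard lacunary systems. My approach would be to control the exponential moment $\int_0^1 e^{\lambda(N_j(\alpha)-N\ell)}\,d\alpha$ via the Fourier expansion $\mathbf{1}_{I_j}(x)-\ell = \sum_{k \neq 0}\widehat{\mathbf{1}}_{I_j}(k)e^{2\pi i k x}$ with $|\widehat{\mathbf{1}}_{I_j}(k)| \leq \min(\ell, 1/(\pi |k|))$: expanding the product $\prod_n e^{\lambda(\mathbf{1}_{I_j}(\alpha a_n)-\ell)}$ and integrating, only integer vectors $(k_n)\in\Z^N$ with $\sum_n k_n a_n = 0$ contribute, and for $r > 3$ the only such relation with $\max_n|k_n| < r-1$ is the trivial one. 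For $r \in (1,3]$ I would first thin the sequence to $(a_{sn})$ with $s=s(r)$ chosen so that $r^s > 3$; since adding back the omitted points can only shrink gaps, the bound for the sparser $\lfloor N/s\rfloor$-point set transfers to the original at the cost of an $r$-dependent constant in the implied bound.

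The main obstacle is controlling the high-frequency Fourier tail, since $\mathbf{1}_{I_j}$ has only $1/|k|$ decay and the tails $|k|\geq r-1$ (where nontrivial relations $\sum_n k_n a_n = 0$ do occur, even for integer lacunary $(a_n)$) are not absolutely summable across $N$ factors. My preferred remedy is to replace $\mathbf{1}_{I_j}$ by a non-negative Selberg-type trigonometric majorant $Q\geq \mathbf{1}_{I_j}$ whose Fourier transform is supported in $|k|\leq K$ with $K<r-1$; this introduces only an $O(1/K)$ additive error in the mean, absorbable into $C_0$, and collapses the integrated product to its diagonal $(\int_0^1 Q)^N$. Balancing these truncation and thinning parameters against the desired tail $e^{-cN\ell}$ pins down $C_0(r)$ and, via the union bound, the threshold $N_0(r)$ appearing in the statement.
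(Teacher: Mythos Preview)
Your Selberg-majorant step does not work as stated, and the obstruction is intrinsic to the probabilistic route, not a technicality.

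A Beurling--Selberg majorant (or minorant) for an interval of length $\ell$ with Fourier support in $|k|\le K$ has integral $\ell \pm 1/(K+1)$. You take $K<r^s-1=O_r(1)$ while $\ell\asymp \log(N)/N\to 0$, so the $1/(K+1)$ term dominates $\ell$; it cannot be ``absorbed into $C_0$'', since $C_0$ rescales $\ell$ multiplicatively. Worse, a majorant $Q\ge \mathbf 1_{I_j}$ only controls $N_j$ from above, which is useless for proving $N_j>0$; and a minorant $q\le \mathbf 1_{I_j}$ with bounded $K$ has $\int q=\ell-1/(K+1)<0$, so $\sum_n q(a_n\alpha)$ has negative mean and cannot certify $N_j>0$ either. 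The phrase ``collapses the integrated product to its diagonal $(\int Q)^N$'' is correct algebra but does not bound $|B_j|$.

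If you fix this by taking $K\gtrsim 1/\ell \asymp N/\log N$ (the minimal support for which a minorant is nonnegative on average), then avoiding nontrivial relations $\sum_n k_n a_n=0$ with $|k_n|\le K$ forces you to thin by a factor $\asymp \log N$, not a constant, leaving $\asymp N/\log N$ points. The expected hits per arc then drop to $O_r(1)$, Bernstein gives only a constant failure probability per arc, and the union bound over $\asymp N/\log N$ arcs blows up. Carried out carefully, this exponential-moment scheme yields the $(\log N)^{2+\varepsilon}/N$ of Theorem~\ref{main3} (and indeed that is exactly how the paper proves Theorem~\ref{main3}), but it cannot reach $\log(N)/N$.

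The paper proves Theorem~\ref{main} by a completely different, deterministic argument. It thins to $K\asymp N/\log N$ terms $\tilde a_n=a_{ln\lfloor\log N\rfloor}$ with consecutive ratios $\ge N^{\xi}$ for some $\xi>1$, checks that $\min\bigl|\sum_{j\le K} m_j\tilde a_j\bigr|>0$ over $|m_j|\le M\asymp N$, and applies Tur\'an's quantitative Kronecker theorem to produce an $\alpha$ with $\|\alpha\tilde a_n-x_n\|\ll \log(N)/N$ for equidistant targets $x_n$. This places each of the $K$ thinned points within $\log(N)/N$ of a prescribed location, which is exactly the regime (one point per arc) where concentration is hopeless but a simultaneous-approximation theorem is decisive.
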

As the proof of Theorem \ref{main} will show, the value of $\alpha$ is not unique, and every sufficiently large interval $I$ contains a value of $\alpha$ for which the conclusion of the theorem holds.\\ 

The next main result improves the first one by allowing the same dilation parameter for all $N$.  
\begin{thm}\label{main2}
 Let $(a_n)_{n\in\N}$ be a lacunary sequence with growth factor $r>1$. Then there exists an $\alpha\in\R$ such that the maximal gap of the first $N$ elements of $(\alpha a_n)_{n\in\N}$ is at most $\log(N)/N$ for all sufficiently large $N$, i.e.
 $$G(\{\alpha a_n\}_{n\le N})\ll_r\frac{\log(N)}{N}.$$
\end{thm}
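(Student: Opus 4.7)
The plan is to derive Theorem~\ref{main2} from Theorem~\ref{main} by a nested-intervals construction, making essential use of the remark following Theorem~\ref{main} that every sufficiently large interval $I$ contains a dilation factor $\alpha$ for which the conclusion of Theorem~\ref{main} holds. In other words, a careful inspection of the proof of Theorem~\ref{main} should yield a quantitative density statement: there is an explicit threshold $L = L(r,N)$ such that every interval of length at least $L$ contains a valid $\alpha$ at scale $N$, with a uniform implicit constant depending only on $r$.

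First I would fix a geometric scale sequence $N_k := \lceil 2^k \rceil$ and inductively produce nested intervals $I_1 \supseteq I_2 \supseteq \cdots$ such that every $\alpha \in I_k$ satisfies $G(\{\alpha a_n\}_{n \le N_k}) \ll_r \log(N_k)/N_k$ with one common implicit constant $C = C(r)$. The inductive step uses the remark after Theorem~\ref{main}: inside $I_k$, choose a valid $\alpha_{k+1}$ for scale $N_{k+1}$. To enlarge it to an interval $I_{k+1} \subset I_k$, I would use that the piecewise linear map $\alpha \mapsto \{\alpha a_n\}$ changes each fractional part at rate $a_n \leq a_{N_{k+1}}$, so perturbing $\alpha$ by at most $\delta_k \ll 1/(a_{N_{k+1}} N_{k+1})$ moves every relevant gap by at most $o(1/N_{k+1})$; hence the bound persists on a full $\delta_k$-neighborhood of $\alpha_{k+1}$. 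Choosing the $I_k$ with lengths decreasing to $0$ (for example with $|I_k| = \delta_k/2$, which we keep above the threshold $L(r,N_{k+1})$ by choosing the parent interval $I_1$ large enough at the start), the intersection $\bigcap_k I_k$ is a nonempty closed point by compactness; pick $\alpha$ in it.

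Next I would verify that this single $\alpha$ handles every intermediate scale. For any $N$ sufficiently large, choose $k$ with $N_k \le N \le N_{k+1}$. Since adding points never enlarges gaps, one has
\[
G(\{\alpha a_n\}_{n \le N}) \le G(\{\alpha a_n\}_{n \le N_k}) \le C\,\frac{\log N_k}{N_k}.
\]
From $N \le N_{k+1} = 2 N_k$ and monotonicity of $x \mapsto (\log x)/x$ for $x \ge e$, it follows that $\log(N_k)/N_k \le 2\log(N)/N \cdot (1+o(1))$, so indeed $G(\{\alpha a_n\}_{n \le N}) \ll_r \log(N)/N$ for all $N \ge N_0$, with a slightly larger implicit constant than in Theorem~\ref{main}.

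The main obstacle is the thickening step, i.e.\ extracting from the proof of Theorem~\ref{main} a quantitative guarantee that a valid $\alpha$ at scale $N_{k+1}$ survives on a neighborhood whose length is bounded below by a usable function of $N_{k+1}$ and $r$. There are two natural routes to this: either (a) the construction of $\alpha$ in Theorem~\ref{main} already produces an entire sub-interval of valid dilates (in which case one takes $I_{k+1}$ to be a sub-interval of this), or (b) one invokes the Lipschitz-type perturbation argument sketched above, which only requires that the bound in Theorem~\ref{main} hold with a strict safety margin (so that a small perturbation does not push the gap above the prescribed threshold). Either variant is sufficient to push the nested-intervals argument through, and the remainder of the proof is a straightforward diagonal extraction.
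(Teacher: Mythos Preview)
Your overall architecture --- nested intervals along a geometric scale, a Lipschitz thickening, then interpolation --- is exactly what the paper does. But there is a genuine quantitative gap in the inductive step, and your parenthetical remark ``which we keep above the threshold $L(r,N_{k+1})$ by choosing the parent interval $I_1$ large enough at the start'' does not close it.

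The threshold $L(r,N)$ coming out of the proof of Theorem~\ref{main} for the full initial segment $\{a_n\}_{n\le N}$ is $4/\delta$, where $\delta$ is bounded below by the \emph{smallest} element of the thinned subsequence, namely $\tilde a_1 = a_{l\lfloor \log N\rfloor}$. Thus $L(r,N_{k+1}) \asymp 1/a_{c\log N_{k+1}}$ for some constant $c=c(r)$. On the other hand, your Lipschitz thickening at stage $k$ forces $|I_k|\asymp 1/(N_k\,a_{N_k})$ (or at best $\log(N_k)/(N_k\,a_{N_k})$), and this is dictated by the largest frequency $a_{N_k}$, not by the size of $I_1$. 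For the nesting $L(r,N_{k+1})\le |I_k|$ you would need $a_{c\log N_{k+1}}\gtrsim N_k\,a_{N_k}$; but with $N_k=2^k$ the index $c\log N_{k+1}\asymp k$ is far below $N_k=2^k$, so $a_{c\log N_{k+1}}<a_{N_k}$, and the required inequality fails for all large $k$.

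The paper repairs this with one additional idea: at stage $k+1$ it does not apply Theorem~\ref{main} to $\{a_n\}_{n\le N_{k+1}}$, but runs the Tur\'an argument on the \emph{shifted block} $\{a_n\}_{N_{k+1}<n\le 2N_{k+1}}$ (Corollary~\ref{translated}). In that block the smallest thinned element exceeds $a_{N_{k+1}}$, so the density threshold drops to $4/a_{N_{k+1}}$, and now the comparison $4/a_{N_{k+1}}\le \log(N_k)/(N_k a_{N_k})$ is immediate from lacunarity (with $N_k=4^k$ one has $a_{N_{k+1}}/a_{N_k}\ge r^{3\cdot 4^k}$). Since adding points only shrinks gaps, controlling the shifted block already controls the full initial segment, and your interpolation step then works as written. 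Once you insert this block-shifting device, the rest of your outline goes through essentially verbatim.
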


In the third main result we study the metric case, that is, we give an upper bound for the maximal gap which holds for almost all dilation parameters $\alpha$ in the sense of Lebesgue measure. 

\begin{thm}\label{main3}
   Let $\varepsilon>0$ and $(a_n)_{n\in\N}$ be a lacunary sequence with growth factor $r>1$.
 Then for almost all $\alpha$ the maximal gap of the set $\{\alpha a_n\}_{n\le N}$ is at most $\log(N)^{2+\varepsilon}/N$ for all sufficiently large $N$, i.e. for Lebesgue-almost all $\alpha\in[0,1]$ there exists an $N_0(\alpha)$, such that
\begin{equation}\label{introeq}
       G(\{\alpha  a_n\}_{n\le N})\le  \frac{\log(N)^{2+\varepsilon}}{N}, 
    \end{equation}
    for all $N\in\N$, with $N\ge N_0(\alpha)$.
\end{thm}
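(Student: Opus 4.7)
The plan is to combine a subgaussian moment bound for lacunary sums of smooth observables with a Borel--Cantelli reduction. Fix $\varepsilon>0$, set $L_N:=\log(N)^{2+\varepsilon}$, and (using that $G$ is non-increasing in $N$, and absorbing a constant factor into $\varepsilon$ via a dyadic argument) reduce to verifying
$$
\sum_{N\in 2^\N}\lambda\bigl\{\alpha\in[0,1]\colon G(\{\alpha a_n\}_{n\le N})>L_N/N\bigr\}<\infty.
$$
For each such $N$, cover $\R/\Z$ by $M_N=O(N/L_N)$ intervals $I_1,\ldots,I_{M_N}$ of length $L_N/(2N)$, spaced so that every interval of length $L_N/N$ contains some $I_j$. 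The event $\{G>L_N/N\}$ is then contained in $\bigcup_j\{S_N^{I_j}(\alpha)=0\}$, where $S_N^I(\alpha):=\#\{n\le N\colon\{\alpha a_n\}\in I\}$.

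Replace $\mathbf 1_{I_j}$ by a smooth non-negative minorant $f_j\le\mathbf 1_{I_j}$ with $\int f_j\asymp L_N/N$, $\|f_j\|_2^2\asymp L_N/N$, and Fourier coefficients $\hat f_j(k)$ essentially supported on $|k|\lesssim N/L_N$; then $\{S_N^{I_j}=0\}\subseteq\{\sum_n f_j(\alpha a_n)=0\}$. The heart of the argument is the Salem--Zygmund/Gapo\v{s}kin-type subgaussian moment estimate
$$
\Bigl\|\sum_{n\le N}\bigl(f_j(\alpha a_n)-\textstyle\int f_j\bigr)\Bigr\|_{L^{2p}([0,1])}\le C_r\sqrt{pN}\,\|f_j\|_2\asymp C_r\sqrt{pL_N}
$$
for $1\le p\le c_r L_N$, i.e.\ the scaling one would obtain for $N$ independent mean-zero random variables of variance $\|f_j\|_2^2$. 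The proof proceeds by Fourier expanding $f_j(x)=\hat f_j(0)+\sum_{k\ne 0}\hat f_j(k)e^{2\pi ikx}$, expanding the $2p$-th moment combinatorially, and estimating
$$
\sum_{\substack{(k_i,n_i)_{i\le 2p}\\ \sum_i k_i a_{n_i}=0,\;k_i\ne 0}}\prod_{i=1}^{2p}|\hat f_j(k_i)|
$$
via a count that exploits the Hadamard lacunarity of $(a_n)$ to limit the number of tuples $(n_i)$ solving each weighted additive equation to $(C_r p)^p$.

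Markov's inequality applied with $p=\lfloor c_r L_N\rfloor$ then gives
$$
\lambda\{\alpha\colon S_N^{I_j}(\alpha)=0\}\le\Bigl(\tfrac{4C_r^2 p}{L_N}\Bigr)^p\le\exp(-c_r'L_N),
$$
and a union bound over the $M_N\le N$ intervals yields
$$
\lambda\bigl\{\alpha\colon G(\{\alpha a_n\}_{n\le N})>L_N/N\bigr\}\le N\exp\bigl(-c_r'\log(N)^{2+\varepsilon}\bigr),
$$
which is summable in $N$; Borel--Cantelli closes the argument.

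The principal obstacle is the subgaussian moment bound itself. A naive triangle inequality
$$\Bigl\|\sum_k\hat f_j(k)T_k\Bigr\|_{2p}\le\|\hat f_j\|_1\max_k\|T_k\|_{2p},$$
where $T_k(\alpha):=\sum_n e^{2\pi ika_n\alpha}$, replaces the $\ell^2$ mass of $\hat f_j$ by its $\ell^1$ mass and bounds each lacunary exponential sum $T_k$ individually by $\sqrt{pN}$, losing a factor of order $\sqrt{N/L_N}$ that renders the bound useless. One must instead exploit the mutual cancellation between the $T_k$ by combinatorially counting solutions to weighted additive equations in $\{a_n\}$; this is where Hadamard lacunarity is crucial, and it translates the near-independence of the random variables $f_j(\alpha a_n)$ into a quantitative moment estimate. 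The residual $\log^{\varepsilon}$ factor beyond the conjecturally optimal $\log^{1+\varepsilon}$ reflects that the subgaussian regime holds only up to $p\asymp L_N$ and that the smooth approximation carries a further loss.
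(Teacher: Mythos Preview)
Your outline is plausible, but it diverges from the paper's argument and leaves the decisive step unproved.

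\textbf{How the paper proceeds.} The paper does \emph{not} attack the $L^{2p}$ moments of $\sum_{n\le N}(f_j(\alpha a_n)-\int f_j)$ combinatorially. Instead it first \emph{thins} to a subsequence $\tilde a_n=a_{n\lfloor l\log(N)^{1+2\varepsilon}\rfloor}$ of length $K\asymp N/\log(N)^{1+2\varepsilon}$. This subsequence grows so fast that it is genuinely linearly independent over integer coefficients bounded by $N/\log(N)^{2+3\varepsilon}$. The paper then bounds an \emph{exponential moment} $\int_0^1\exp(\lambda\,\omega^*_{N,t})\,d\alpha$ via the elementary inequality $e^x\le 1+x+x^2$ (valid for $|x|\le1$), after which the product over $n$ integrates term-by-term and every cross term vanishes by \emph{exact} $L^2$-orthogonality. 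The upshot is a clean subgaussian bound with no combinatorics at all; as the paper itself remarks, this ``avoids the very delicate combinatorial problems'' of the high-moment approach used by Chow--Technau, which yielded only $\log(N)^{3+\varepsilon}/N$.

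\textbf{Where your argument has a gap.} Everything in your scheme rests on the inequality
\[
\Bigl\|\sum_{n\le N}\bigl(f_j(\alpha a_n)-\textstyle\int f_j\bigr)\Bigr\|_{L^{2p}}\le C_r\sqrt{pN}\,\|f_j\|_2,\qquad 1\le p\le c_r L_N,
\]
with the \emph{$L^2$-scaling} in $f_j$. Your justification (``limit the number of tuples $(n_i)$ solving each weighted additive equation to $(C_r p)^p$'') is not correct as written: for any tuple $(k_i)$ that pairs up as $k_{2s-1}=-k_{2s}$, the equation $\sum_i k_i a_{n_i}=0$ already has $\ge N^p$ solutions $(n_i)$, so the count must carry a factor $N^p$. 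More seriously, even with the correct count $(C_r p)^pN^p$ per fixed $(k_i)$, summing over all $(k_i)$ with weights $\prod_i|\hat f_j(k_i)|$ would produce $\|\hat f_j\|_1^{2p}$, not $\|\hat f_j\|_2^{2p}$ --- precisely the loss you yourself flag in the last paragraph. Recovering the $\ell^2$-scaling requires showing that non-paired solutions to $\sum_i k_i a_{n_i}=0$ contribute negligibly, and this is the delicate part that your sketch does not address. The moment method of Chow--Technau, which is the natural precedent for your route, only reaches the exponent $3+\varepsilon$; closing the gap to $2+\varepsilon$ along your lines would require a genuinely sharper combinatorial input than anything you indicate. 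The paper's thinning-plus-MGF device sidesteps this entirely.
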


This theorem can be extended to more general measures, subject to a condition on the decay rate of the Fourier transform of the measure. This result will be relevant for our application in the context of multiplicative Diophantine approximation, where $\alpha$ will have to be chosen from a Lebesgue null set.

\begin{thm}\label{main35}
   Let $\varepsilon>0$, and let $(a_n)_{n\in\N}$ be a lacunary sequence with growth factor $r>1$. Let a measure $\mu\in\mathcal{M}[0,1]$ be given and assume that its Fourier Transform decays as $\left|(\mathcal{F}\mu)(x)\right|\ll(1+|x|)^{-\Upsilon}$ for some $\Upsilon>0$.
    %,$N/\log(N)>l>1/\log(r)$, $c\in\R^+$ 
 Then the maximal gap of the set $\{\alpha a_n\}_{n\le N}$ is at most $\log(N)^{2+\varepsilon}/N$ for all sufficiently large $N$, i.e. for $\mu$-almost all $\alpha\in[0,1]$, there exists an $N_0(\alpha)$, such that
\begin{equation}\label{introeq2}
       G(\{\alpha  a_n\}_{n\le N})\le \frac{\log(N)^{2+\varepsilon}}{N}, 
    \end{equation}
    for all $N\in\N$, with $N\ge N_0(\alpha)$.
\end{thm}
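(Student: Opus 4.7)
The plan is to parallel the proof of Theorem \ref{main3}, substituting the orthogonality of characters under Lebesgue measure with the polynomial Fourier decay hypothesis on $\mu$. First I would reduce the gap bound to a covering problem: set $\delta_N := \log(N)^{2+\varepsilon}/N$ and cover $[0,1]$ by $J_N \asymp 1/\delta_N \leq N$ intervals $I_j$ of length $\delta_N/2$ placed on a $\delta_N/4$-grid, so that any gap of size at least $\delta_N$ in $\{\alpha a_n\}_{n \leq N}$ entirely contains some $I_j$. If $B_N := \{\alpha \in [0,1] : G(\{\alpha a_n\}_{n \leq N}) > \delta_N\}$, then $B_N \subseteq \bigcup_j \{\alpha : I_j \cap \{\alpha a_n\}_{n \leq N} = \emptyset\}$, and by Borel--Cantelli it is enough to show $\sum_N \mu(B_N) < \infty$. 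For each $j$ I would pick a non-negative smooth bump $\psi_j \geq \mathbf{1}_{I_j}$ with $\int \psi_j \asymp \delta_N$ and Fourier coefficients $|c_k^{(j)}| \ll \delta_N\, \min(1, (\delta_N |k|)^{-A})$ for any prescribed $A$, and study the smooth counting function $S_N^{(j)}(\alpha) := \sum_{n \leq N} \psi_j(\alpha a_n)$. A direct calculation, using $a_n \geq r^n$ and $|(\mathcal{F}\mu)(x)| \ll (1+|x|)^{-\Upsilon}$ to discard the $k \neq 0$ tail, yields $M_N := \int S_N^{(j)} \, d\mu \asymp N\delta_N = \log(N)^{2+\varepsilon}$.

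The central step is a sub-Gaussian concentration inequality
$$
\mu\bigl\{\alpha : |S_N^{(j)}(\alpha) - M_N| \geq M_N/2\bigr\} \ll \exp\bigl(-c\, \log(N)^{2+\varepsilon}\bigr)
$$
for some $c = c(r, \Upsilon) > 0$, obtained by a moment method. For an even integer $2p$, I would expand
$$
\int (S_N^{(j)} - M_N)^{2p}\, d\mu = \sum_{k_1, \ldots, k_{2p}} c_{k_1}^{(j)} \cdots c_{k_{2p}}^{(j)} \sum_{n_1, \ldots, n_{2p}} (\mathcal{F}\mu)(k_1 a_{n_1} + \cdots + k_{2p} a_{n_{2p}}),
$$
where the frequency sums are effectively truncated at $|k_i| \ll 1/\delta_N$ by the rapid decay of $c_k^{(j)}$. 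I would then split the tuples into \emph{diagonal} ones, where $k_1 a_{n_1} + \cdots + k_{2p} a_{n_{2p}} = 0$, and \emph{off-diagonal} ones. The diagonal contribution, by the classical Salem--Zygmund argument, yields the usual Gaussian bound $(C p N \delta_N)^p$, since lacunarity forces the indices in any vanishing combination to pair up with opposite signs. Off-diagonal tuples carry an extra factor $(1 + |k_1 a_{n_1} + \cdots + k_{2p} a_{n_{2p}}|)^{-\Upsilon}$; since by lacunarity a non-vanishing linear combination is at least a positive power of $a_{\max n_i}$ (bounding smaller terms by a geometric series in $1/r$), this is a lower-order correction for every $\Upsilon > 0$. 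Markov's inequality with the optimal $p \asymp N\delta_N$ then produces the displayed sub-Gaussian tail bound.

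A union bound over $J_N \leq N$ intervals gives $\mu(B_N) \ll N \exp(-c \log(N)^{2+\varepsilon})$, which is summable in $N$, and Borel--Cantelli furnishes a $\mu$-full-measure set of $\alpha$ on which the gap bound holds for all sufficiently large $N$. The hard part will be the off-diagonal book-keeping in the moment identity: one must group the $(k_i, n_i)$-tuples by how close the linear combination $\sum k_i a_{n_i}$ is to vanishing, use lacunarity to separate ``near-diagonal'' tuples (which are few but must be enumerated) from ``genuinely off-diagonal'' ones (where the decay of $\mathcal{F}\mu$ can be exploited), and verify that the whole off-diagonal contribution remains under control uniformly in $\Upsilon > 0$, however small. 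This is the substantive departure from the Lebesgue argument underlying Theorem \ref{main3}, where orthogonality annihilates every off-diagonal contribution outright.
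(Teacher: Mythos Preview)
Your off-diagonal estimate has a genuine gap. You claim that a non-vanishing combination $\sum_i k_i a_{n_i}$ with $|k_i|\ll 1/\delta_N$ is bounded below by a positive power of $a_{\max_i n_i}$, via geometric-series domination. This fails once the coefficient range is large compared to the growth ratio $r$: for $a_n=2^n$, take $n_1=n_2=N$, $k_1=1$, $k_2=-1$, $n_3=1$, $k_3=1$, and fill the remaining $2p-3$ slots with further cancelling pairs at index $N$; the sum equals $a_1$, independent of $N$. In general, with $|k_i|$ ranging up to $N/\log(N)^{2+\varepsilon}$ and only the Hadamard condition $a_{n+1}\ge r a_n$, the values $\sum_i k_i a_{n_i}$ can be any small nonzero integer, so the factor $(1+|\sum_i k_i a_{n_i}|)^{-\Upsilon}$ gives no decay on a huge set of tuples. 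The geometric-series lower bound you invoke is valid only when the coefficient range is strictly smaller than the effective growth ratio of the sequence---which is exactly why one must first pass to a sparse subsequence $\tilde a_n$ with ratio $\ge N^\xi$, a step absent from your $S_N^{(j)}$. A related oversimplification occurs on the diagonal: without this thinning, the solutions of $\sum_i k_i a_{n_i}=0$ are not forced to be pairings.

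Even after thinning, the route you outline---high $L^{2p}$ moments with $p=p(N)\asymp N\delta_N$---is the Chow--Technau method the paper is improving; the paper explicitly notes that it leads to ``very delicate combinatorial problems'' and a weaker logarithmic exponent. The paper's argument is structurally different. It bounds the exponential moment $\|\exp(\tfrac{1}{10R}\omega^*_{N,t})\|_{L^1(\mu)}$, using $e^x\le 1+x+x^2$ factor by factor over the thinned subsequence so that the whole calculation collapses to an $L^2$-orthogonality step. For the general measure $\mu$ it then expands $e^{\omega^*/(10R)}$ in its Fourier series with \emph{Lebesgue} coefficients $q_m$, writing $\int e^{\omega^*/(10R)}\,d\mu$ as a sum controlled by $\sum_m |q_m|(1+|m|)^{-\Upsilon}$; the index window is shifted to $K<n\le 2K$ so that every frequency appearing in $\omega^*$ exceeds $a_N$, and the sum over $m$ is split at $|m|=a_N$. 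For $|m|\le a_N$, Lebesgue orthogonality still controls $q_m$ exactly as in the proof of Theorem~\ref{main3}; for $|m|>a_N$, a crude term count $7^{N/Q}(2N/P)^{2N/Q}$ is beaten by the decay $a_N^{-\Upsilon}\le r^{-\Upsilon N}$. This moment-generating-function-plus-Fourier-series device is what bypasses the tuple-by-tuple off-diagonal accounting your proposal would require.
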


As a consequence of Theorem \ref{main35}, we obtain a quantitative improvement in an inhomogeneous version of the Littlewood conjecture in multiplicative Diophantine approximation. This is inspired by the recent work of Chow and Technau \cite{chow2023dispersion}. For the statement of the theorem, we define $\mathcal{L}:=\{x \in\R:\Lambda(\beta)<\infty\}$, where $\Lambda(\beta)=\sup\left\{\log(p_k(\beta))/k:k\ge 1\right\}$, with $p_k$ being the continuants of $\beta$ and  $\textnormal{\textbf{Bad}}:=\{\alpha\in\R:\inf\{n||n\alpha||>0\}$. We will provide more context for this result in Section \ref{sec:little}.

\begin{thm}\label{little}
Let $\varepsilon>0$, $\beta\in \mathcal{L}$ and $\zeta\in\R$.
Then there exists a set $E\subseteq \textnormal{\textbf{Bad}}$ with Hausdorff-dimension $\dim_H(E)=1$, such that for $\alpha\in E$ and $\eta\in\R$ there are infinitely many solutions $n$ to 
$$n||\alpha n-\eta||\cdot||\beta n-\zeta||\le \frac{\log(\log(n))^{2+\varepsilon}}{\log(n)}.$$ 
\end{thm}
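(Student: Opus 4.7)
\begin{proofs}
The plan is to deduce Theorem \ref{little} from Theorem \ref{main35}, applied to a tailored lacunary sequence and a Kaufman-type measure on $\textnormal{\textbf{Bad}}$. There are three ingredients.

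First, for any $\beta \in \mathcal{L}$ and any $\zeta \in \R$, I claim one can construct a lacunary integer sequence $(a_j)$ with growth factor $r = r(\beta) > 1$ such that $a_j \, \|\beta a_j - \zeta\| \ll_\beta 1$ for every $j$. This construction uses the Ostrowski expansion of $\zeta$ with respect to the convergent denominators $q_k = q_k(\beta)$ (which coincide, up to trivial index shift, with the continuants $p_k$ in the definition of $\mathcal{L}$): writing $\zeta \equiv \sum_{k \ge 0} c_k \{q_k \beta\} \pmod 1$ with bounded Ostrowski digits $c_k$, the truncated sums $a_j := \sum_{k \le K_j} c_k q_k$ satisfy $\|\beta a_j - \zeta\| \ll 1/q_{K_j+1}$ together with $a_j \le q_{K_j+1}$, which yields $a_j \|\beta a_j - \zeta\| \ll_\beta 1$. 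Choosing the cutoff gaps $K_{j+1} - K_j = k_0$ large enough in terms of $\Lambda(\beta)$, and supplementing with auxiliary $q_k$-summands when $\zeta$ has sparse Ostrowski support (as in Chow--Technau), the two-sided bounds $\phi^k \le q_k \le e^{\Lambda(\beta)k}$ force a uniform lacunary growth factor $r(\beta) > 1$.

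Second, I would invoke Kaufman's construction of measures on $\textnormal{\textbf{Bad}}$: for every sufficiently large integer $M$ there is a probability measure $\mu_M$ supported on the subset $\textnormal{\textbf{Bad}}(M) \subseteq \textnormal{\textbf{Bad}}$ of irrationals with partial quotients bounded by $M$, whose Fourier transform satisfies $|\widehat{\mu_M}(\xi)| \ll (1+|\xi|)^{-\Upsilon_M}$ for some $\Upsilon_M > 0$; classically, $\dim_H \textnormal{\textbf{Bad}}(M) \to 1$ as $M \to \infty$. Applying Theorem \ref{main35} to the lacunary sequence $(a_j)$ from step one and each $\mu_M$ produces a $\mu_M$-full set $E_M \subseteq \textnormal{\textbf{Bad}}(M)$ on which the dispersion bound $G(\{\alpha a_j\}_{j \le N}) \le \log(N)^{2+\varepsilon}/N$ holds for all sufficiently large $N$. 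The countable union $E := \bigcup_M E_M$ is then a subset of $\textnormal{\textbf{Bad}}$ with $\dim_H E = 1$.

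Finally, fix $\alpha \in E$ and any $\eta \in \R$. For every sufficiently large $N$, the dispersion bound yields an index $j \le N$ with $\|\alpha a_j - \eta\| \le \log(N)^{2+\varepsilon}/N$. Combining this with $\|\beta a_j - \zeta\| \ll_\beta 1/a_j$ from step one gives
$$
a_j \, \|\alpha a_j - \eta\| \, \|\beta a_j - \zeta\| \;\ll_\beta\; \frac{\log(N)^{2+\varepsilon}}{N}.
$$
Since $(a_j)$ is lacunary with ratios between $r(\beta)$ and some $R(\beta)$, we have $j \asymp_\beta \log a_j$, so taking $N = j$ and setting $n := a_j$ converts the right-hand side into $\ll_\beta \log(\log n)^{2+\varepsilon}/\log n$. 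Letting $N$ range over an unbounded sequence produces infinitely many such integers $n$, as required.

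The main technical obstacle is the first step: producing, uniformly in $\zeta$, a lacunary sequence with $a_j \|\beta a_j - \zeta\| \ll_\beta 1$ whose growth factor depends only on $\Lambda(\beta)$ and not on the arithmetic of $\zeta$. The $\mathcal{L}$-hypothesis is essential here -- the upper bound $q_k \le e^{\Lambda k}$ simultaneously controls the Ostrowski digits of $\zeta$ and forces a uniform geometric spacing in $(a_j)$ -- while the remaining steps are essentially routine applications of the classical Kaufman construction and of Theorem \ref{main35} of this paper.
\end{proofs}
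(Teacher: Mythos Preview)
Your proposal is correct and follows essentially the same route as the paper. The paper packages your first step as a citation to Chow--Zafeiropoulos (which gives exactly the lacunary sequence $(a_j)$ with $a_j\|\beta a_j-\zeta\|\ll 1$ and $\log a_j\asymp_\beta j$ that you sketch via Ostrowski), and for the dimension argument it uses a single Kaufman measure together with the mass distribution principle rather than a countable family $\mu_M$; both versions are standard. One small point: to pass from ``$E_M$ is $\mu_M$-full'' to ``$\dim_H E_M$ is close to $\dim_H\textnormal{\textbf{Bad}}(M)$'' you need the Frostman bound $\mu_M(I)\ll |I|^{\varrho_M}$ for Kaufman's measure (a $\mu_M$-null set removed from $\textnormal{\textbf{Bad}}(M)$ could in principle carry all the dimension otherwise), which is exactly the property the paper invokes via the mass distribution theorem --- you should list it alongside the Fourier decay.
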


Notation: Throughout this paper, we write $\Z$ for the set of integers, $\N:=\Z_{>0}$ for the set of positive integers, $\R$ for the real numbers and $\R^+$ the positive real numbers. We denote the zero-vector as $\mathbf{0}$. The Vinogradov symbol ''$\ll$'' means 'less than or equal to,' up to constants, $\simeq$ means equal up to constants. The distance to the nearest integer is denoted by $||.||:=\min_{n\in\Z}|.-n|$. For $a,b\in\R$, $\lfloor a\rfloor$ and $\lceil b\rceil$ are the next smallest and the next largest integer of $a$ and $b$ respectively, i.e. $\lfloor a\rfloor:=\max_{n\in\N}(n:n\le a)$, $\lceil b\rceil=\min_{n\in\N}(n:n\ge b)$ respectively.
We write $\{a\}$ for the fractional part of $a$, i.e. if $a>0:$ $\{a\}=a-\lfloor a\rfloor$; if $a<0:$ $\{a\}=a-\lceil a\rceil$.  When $I$ is an interval, we  denote its  length by $|I|$.

\section{Small dispersion for a suitable dilation parameter. Proof of Theorem \ref{main}.}

%\begin{rem}
%For the lacunary sequences we have the following equivalence:
%\begin{equation}\label{rem1}
%   a_n=ra_{n-1}\ \forall n>0 \iff\exists c\in\R : a_n=cr^n \forall n>0.
%\end{equation}
%\begin{equation}\label{rem2}
%   a_n>ra_{n+1}\iff \exists c\in\R, c_1,c_2,...>1: a_n=c\prod_{i=1}^nc_nr^{n+1}.
 %  \end{equation}
%\end{rem}
We use Tur\'an's localized and quantitative version of Kronecker's Theorem, see Chapter 4 in \cite{GONEK2016506}:
\begin{thm}\label{turan}[Tur\'an's localized and quantitative Kronecker Theorem]\\
For $K>0$, let $x_1,x_2,...,x_K$ and $a_1,a_2,...,a_K$ be given real numbers, let $\varepsilon_1,\varepsilon_2,...,\varepsilon_K$ be positive real numbers with $0<\varepsilon_n<1/2$ for all $n$. Let $$M_n:=\left\lceil\frac{1}{\varepsilon_n}\log\left(\frac{K}{\varepsilon_n}\right)\right\rceil,\quad\delta:=\min\left|\sum_{j=1}^{K}m_ja_j\right|,$$
where the minimum is taken over all lattice points $\mathbf{m}:=(m_1,m_2,...,m_K)\in\Z^K\setminus\{\mathbf{0}\}$, with $|m_j|\le M_n$.
If $\delta>0$, then in any interval $I$ of length at least $4/\delta$, there is a real number $\alpha$ such that    
\begin{equation}\label{turaneq}
  ||\alpha a_n-x_n||\le\varepsilon_n.  
\end{equation}
\end{thm}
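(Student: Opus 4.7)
My plan is to prove Turán's localised quantitative Kronecker theorem by a Fourier-analytic argument: I would show that the push-forward of Lebesgue measure on $I$ under the dilation map is spectrally close to Haar measure on $(\R/\Z)^K$, and then use a Beurling--Selberg trigonometric approximation of the target box to conclude that the box receives positive mass.

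Let $\mu$ denote the push-forward of the uniform probability measure on $I$ under the map $\alpha\mapsto(\alpha a_j - x_j\bmod 1)_{j=1}^K$. Its Fourier coefficients are
\[
\hat\mu(\mathbf m) \;=\; \frac{e^{2\pi i\sum_j m_j x_j}}{|I|}\int_I e^{-2\pi i\alpha\sum_j m_j a_j}\,d\alpha.
\]
For $\mathbf m=\mathbf 0$ this equals $1$; for $\mathbf m\ne\mathbf 0$ with $|m_j|\le M_j$, the frequency $\sum_j m_j a_j$ has modulus $\ge\delta$ by hypothesis, so elementary integration yields $|\hat\mu(\mathbf m)|\le 1/(\pi\delta|I|)\le 1/(4\pi)$ whenever $|I|\ge 4/\delta$. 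Thus $\mu$ differs from Haar measure by at most $1/(4\pi)$ on every non-zero Fourier mode inside $\prod_j[-M_j,M_j]$.

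To extract an $\alpha_0\in I$ satisfying all $K$ approximations simultaneously, I would construct a trigonometric polynomial minorant $B(\mathbf t)=\prod_{j=1}^K B_j(t_j)$ of the target box indicator $\prod_j\chi_{[-\varepsilon_j,\varepsilon_j]}(t_j)$, with each $B_j$ of degree $\le M_j$ built from a Beurling--Selberg-type construction so as to have mean close to $2\varepsilon_j$ and well-controlled Fourier tails. Then Parseval gives
\[
\mu(\text{target box})\;\ge\;\int B\,d\mu\;=\;\hat B(\mathbf 0)+\sum_{\mathbf m\ne\mathbf 0}\hat B(\mathbf m)\,\overline{\hat\mu(\mathbf m)},
\]
and the particular scaling $M_n=\lceil(1/\varepsilon_n)\log(K/\varepsilon_n)\rceil$ is calibrated so that the per-coordinate ratio $\bigl(\sum_{m\ne 0}|\hat B_n(m)|\bigr)/\hat B_n(0)$ is small enough ($\ll 1/K$) that, after multiplying over $K$ coordinates, the right-hand side stays strictly positive. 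This gives $\mu$-mass in the target box and hence the required $\alpha_0$.

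The main obstacle is the explicit construction and Fourier-tail estimation of the one-dimensional minorants $B_n$: for a single trigonometric polynomial of degree $\le M_n$ one needs the mean $\hat B_n(0)\approx 2\varepsilon_n$ and simultaneously a tail bound roughly of the form $\sum_{m\ne 0}|\hat B_n(m)|\ll\varepsilon_n/K$, which is exactly the regime in which standard Beurling--Selberg extremal function theory applies. Matching these one-dimensional losses against the spectral bound $|\hat\mu(\mathbf m)|\le 1/(4\pi)$ and then combining across the $K$ coordinates is what pins down both the logarithmic factor $\log(K/\varepsilon_n)$ in the hypothesis on $M_n$ and the sharp length threshold $|I|\ge 4/\delta$ in the statement.
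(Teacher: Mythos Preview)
The paper does not prove this theorem; it is quoted from the literature (Chapter~4 of \cite{GONEK2016506}) and used as a black box in the proof of Theorem~\ref{main}. So there is no in-paper proof to compare against, and I can only comment on your outline on its own merits.

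Your Fourier-analytic framework is indeed the classical route to localised Kronecker theorems, but the specific invocation of Beurling--Selberg minorants does not do what you claim. Two concrete problems. First, the Selberg minorant of an interval is not non-negative, so the product $\prod_j B_j$ is \emph{not} a minorant of the product box: two negative factors multiply to a positive one, and the inequality $B\le\prod_j\chi_{[-\varepsilon_j,\varepsilon_j]}$ fails. Second, for the Selberg minorant of $[-\varepsilon_n,\varepsilon_n]$ of degree $M_n$ one has $\hat B_n(0)\approx 2\varepsilon_n-1/(M_n+1)$ while $\sum_{m\ne 0}|\hat B_n(m)|$ is of order~$1$, so the ratio you need to be $\ll 1/K$ is actually of order $1/\varepsilon_n$; the tail bound you assert is simply not available for this construction.

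The standard Tur\'an argument replaces $B_n$ by a high power of a Fej\'er kernel,
\[
B_n(t)=\left(\frac{\sin(\pi L_n t)}{L_n\sin(\pi t)}\right)^{2r_n},
\]
with $L_n\asymp 1/\varepsilon_n$ and $r_n\asymp\log(K/\varepsilon_n)$. These functions are non-negative, so the product is non-negative and one argues by comparing the total mass $\int_I\prod_n B_n(\alpha a_n-x_n)\,d\alpha$ against the contribution from ``bad'' $\alpha$ (where some $\|\alpha a_n-x_n\|>\varepsilon_n$, forcing one factor to be exponentially small in $r_n$). The exponent $r_n$ is precisely what buys the factor $1/K$ suppression of each bad coordinate, and this is the true origin of the logarithm in $M_n=\lceil\varepsilon_n^{-1}\log(K/\varepsilon_n)\rceil$; it does not come from a Beurling--Selberg tail estimate. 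With that substitution your outline becomes the textbook proof.
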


Now we explain how Theorem \ref{main} follows from Tur\'an's Theorem \ref{turan}.

\begin{proof}[Proof of Theorem \ref{main}.]
    We first define a subset of our given set of lacunary type, which satisfies the assumptions of Tur\'an's Theorem \ref{turan}, which then yields a distance between equidistant points on the torus and the points of the dilated subset of at most $\log(N)/N$ up to constants, which concludes the proof.\\
 \textbf{Step 1:} 
Let $l:=\max\{1,\lceil\frac{1}{\log (r)}\rceil+1/2\}$, so that $r^l > e$.
 We define a subset $\{\tilde{a}_n\}_{n\le K}$ such that $\tilde{a}_n=a_{ln\lfloor\log(N)\rfloor}$, and $K=\lfloor N/(l\log(N)) \rfloor$, so that \begin{equation}\label{ta1}
    \tilde{a}_{n+1}\ge r^{l\lfloor\log(N)\rfloor}\tilde{a}_n\ge N^{\xi}\tilde{a_n},
\end{equation}
 for $\xi:=l\log(r)>1$, and accordingly for $m\le n$
 \begin{equation}\label{ta2}
     \tilde{a}_{m}\le N^{-\xi(n-m)}\tilde{a}_n.
 \end{equation}
Claim: The set $\{\tilde{a}_n\}_{n\le K}$ satisfies  
\begin{equation}\label{delta0}
        \delta_M:=\min_{(m_1,...,m_K)\in(\Z^K\setminus\{\mathbf{0}\}\cap[-M,M]^K)}|\sum_{j=1}^Km_j\tilde{a}_j|>0
    \end{equation}
    for $N$ sufficiently large, where $M:=\left\lceil\frac{1}{\varepsilon}\log\left(\frac{K}{\varepsilon}\right)\right\rceil$ and $\varepsilon:=\frac{l\log(N)}{2N}$.
    Observe that for every $M<\tilde{M}$ we have $\delta_M\ge\delta_{\tilde{M}}$.
%    \begin{equation}\label{M}
 %     \min_{\mathbf{m}\in(\Z^K\setminus\{\mathbf{0}\}\cap[-M,M]^K)}|\sum_{j=1}^K m_j\tilde{a}_j|\ge\min_{\mathbf{m}\in(\Z^K\setminus\{\mathbf{0}\}\cap[-\tilde{M},\tilde{M}]^K)}|\sum_{j=1}^K m_j\tilde{a}_j|.  
 %   \end{equation}
This justifies estimating $M$ by something larger:
\begin{equation}\label{Mest1}
        M=\left\lceil\frac{1}{\varepsilon}\log\left(\frac{K}{\varepsilon}\right)\right\rceil\le
        \frac{2N}{l\log(N)}\log\left(\frac{\left\lfloor\frac{N}{l\log(N)}\right\rfloor}{\frac{l\log(N)}{2N}}\right)+1\le 4l^{-1}N
%        &\le\frac{4N}{l\log{N}}(\log(\sqrt{2})+\log(N)-\log(l\log(N)))+1\\\
%        &\le4Nl^{-1}+\frac{4N}{l\log{N}}(\log(\sqrt{2})-\log(l\log(N)))+1,
\end{equation}
for $N$ sufficiently large. Next, for given $(m_1,...,m_K)\in(\Z^K\setminus\{\mathbf{0}\}\cap[-4Nl^{-1},4Nl^{-1}]^K)$ we define $K_0$ as the largest index such that $m_j\ne0$, i.e. $m_{K_0}\ne0,m_{K_0+1},...,m_{K}=0$. This $K_0$ exists since $\mathbf{m}\ne\mathbf{0}$ by assumption. 
The idea is to take the last nonzero summand $\tilde{a}_{K_0}$, multiply it by the smallest possible $m_{K_0}$, i.e. $m_{K_0}=1$, and multiply all the other summands by the largest possible $m_j$, i.e. $m_j=4l^{-1}N$. If the $K_0$-th summand still dominates all the others, then the complete sum cannot be zero.
By equation \eqref{ta1}, \eqref{ta2} and the geometric series we have
\begin{align}\label{estimate}
\begin{split}
   &\ \ \ \sum_{j=1}^{K_0-1}m_j\tilde{a}_j\le4l^{-1}N\sum_{j=1}^{K_0-1} \tilde{a}_j\stackrel{\eqref{ta2}}{\le}4l^{-1}N\sum_{j=1}^{K_0-1}N^{-\xi(K_0-j)}\ta_{K_0} \\
   &\le4l^{-1}N^{-\xi K_0+1}\frac{N^{\xi(K_0-1)}-1}{N^{\xi}-1}N^{\xi}\ta_{K_0}\le 8l^{-1}N^{1-\xi}\ta_{K_0}\le\tilde{a}_{K_0},
    \end{split}
\end{align} 
for all $|m_j|\le4l^{-1}N$ and $N$ sufficiently large.
This implies $\delta_{4l^{-1}N}>0$, hence $\delta_M>0$.\\
\textbf{Step 2:}
Let $x_1,...,x_K$ be equidistant points on the torus, $\varepsilon_1=...=\varepsilon_K=\varepsilon$ and hence $M_1=...=M_K=M$. By the first step we can apply Tur\'an's Theorem \ref{turan} to $\{\tilde{a}_n\}_{n\le K}$ and get existence of an $\alpha\in\R$ in every interval of length larger than $4/\delta_M$, such that 
$$||\alpha \tilde{a}_n-x_n||\ll\varepsilon,$$
for all $n=1,...,K$.
 Hence the maximal gap of the set $\{\alpha \tilde{a}_n\}_{n\le K}$ is at most $\log(N)/N$ up to constants.
   Since adding more points can only decrease the maximal gap-size, the same is true for every superset of $ \left\{\alpha \tilde{a}_n \right\}_{n\le K} $, and so the maximal gap of $\{\alpha a\}_{n\le N}$ is also at most $\log(N)/N$ up to constants.\\
   Furthermore, by equation \eqref{estimate} we have 
   \begin{equation}\label{delta5}
     \delta>\left|\tilde{a}_{K_0}\left(1-8l^{-1}\frac{N}{N^{\xi}}\right)\right|. 
  \end{equation}
  Taking the smallest possible $K_0$, i.e. $K_0=1$, implies that there exists an $\alpha$ in every interval $I$ of length
  \begin{equation}
      |I|>\frac{4}{\tilde{a}_1}=\frac{4}{N^{\xi}}
  \end{equation}
  for $N$ sufficiently large.
\end{proof}
\begin{cor}\label{translated}
    Let $(a_n)_{n\in\N}$  be a lacunary sequence with growth factor $r>1$.
    Then for all $N\in\N$ sufficiently large, there exists an $\alpha\in\R$ in every interval $I$ with
    \begin{equation}\label{bound11}
      |I|\ge \frac{4}{a_{N}},
    \end{equation}
    such that the maximal gap of the set $\{\alpha a_n\}_{N\le n\le 2N}$ is at most $\log(N)/N$ up to constants, i.e.
    \begin{equation}\label{gap100}
        G(\{a_n\}_{N<n\le 2N})\ll_r\frac{\log(N)}{N}.
    \end{equation}
\end{cor}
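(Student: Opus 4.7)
The plan is to recycle the proof of Theorem \ref{main} with the thinned subsequence shifted so that its first element is $a_N$ rather than roughly $a_{l\lfloor\log N\rfloor}$. Keeping the choices $l := \max\{1,\lceil 1/\log r\rceil + 1/2\}$, $\xi := l\log r > 1$, and $K := \lfloor N/(l\log N)\rfloor$ exactly as before, I would instead define
$$
\tilde{a}_n := a_{N + (n-1)l\lfloor \log N\rfloor}, \qquad n = 1,\ldots,K,
$$
so that $\tilde{a}_1 = a_N$ and, for $N$ sufficiently large, $\tilde{a}_K \leq a_{2N}$. The whole subsequence then lies inside $\{a_n\}_{N \leq n \leq 2N}$, and the ratio bound $\tilde{a}_{n+1}/\tilde{a}_n \geq r^{l\lfloor\log N\rfloor} \geq N^\xi$ is inherited from the lacunarity of $(a_n)$, so the growth estimates \eqref{ta1}--\eqref{ta2} continue to hold.

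Next I would observe that Step 1 of the proof of Theorem \ref{main} (establishing $\delta_M > 0$ for $M \leq 4l^{-1}N$ with $\varepsilon := l\log(N)/(2N)$) carries over without modification, because the geometric-series estimate \eqref{estimate} uses only the relative growth of consecutive $\tilde{a}_j$'s, not the absolute value of $\tilde{a}_1$. In particular, the analogue of \eqref{delta5} gives $\delta \geq \tilde{a}_1(1 - 8l^{-1}N^{1-\xi}) \geq a_N/2$ once $N$ is so large that $8l^{-1}N^{1-\xi} \leq 1/2$. Applying Tur\'an's Theorem \ref{turan} with $K$ equidistant targets $x_1,\ldots,x_K$ on the torus therefore produces some $\alpha$ in every interval of length exceeding $4/\delta$, a condition implied by $|I| \geq 4/a_N$ after absorbing harmless constants into the choice of $N_0(r)$.

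Step 2 then transfers verbatim: the Tur\'an inequality $\|\alpha \tilde{a}_n - x_n\| \leq \varepsilon \ll \log(N)/N$ together with the equidistant spacing $1/K \ll \log(N)/N$ forces the maximal gap of $\{\alpha \tilde{a}_n\}_{n \leq K}$ to be $\ll_r \log(N)/N$, and since $\{\alpha \tilde{a}_n\}_{n \leq K} \subseteq \{\alpha a_n\}_{N \leq n \leq 2N}$, enlarging to the full superset can only shrink the gaps, yielding \eqref{gap100}. No substantial obstacle is expected: the corollary is essentially Theorem \ref{main} translated by $N$ positions, and the only genuinely small subtlety is ensuring that $N_0(r)$ is chosen large enough to absorb the constant discrepancy between the Tur\'an threshold $4/\delta$ and the stated threshold $4/a_N$.
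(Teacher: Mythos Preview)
Your proposal is correct and follows essentially the same route as the paper: both thin $(a_n)$ over the index window $[N,2N]$ with spacing $l\lfloor\log N\rfloor$ and rerun the two steps of Theorem~\ref{main}, the only difference being that you re-index so that $\tilde a_1=a_N$ while the paper keeps indices $K<n\le 2K$. One small correction: the constant discrepancy you flag between $4/\delta$ and $4/a_N$ cannot literally be ``absorbed into $N_0(r)$'', but in fact none is needed---for large $N$ the minimum defining $\delta$ is attained at $m=(\pm1,0,\dots,0)$ (any $m$ with $K_0\ge 2$ yields a value $\ge\tilde a_2(1-o(1))\gg a_N$), so $\delta=\tilde a_1=a_N$ exactly and $4/\delta=4/a_N$.
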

\begin{proof}
First consider a subset $\{\tilde{a}_n\}_{K<n<2K}$ with $\tilde{a}_n=a_{nl\log(2N)}$, $K=N/l\log(N)$ and $l$ as before.
    We only need to adapt equation \eqref{estimate} to the new indices:
    \begin{align}\label{estimate2}
\begin{split}
   &\ \ \ \ \sum_{j=K+1}^{K_0-1}m_j\tilde{a}_j\stackrel{\eqref{ta2}}{\le}4l^{-1}N\sum_{j=K+1}^{K_0-1}N^{-\xi(K_0-j)}\ta_{K_0} \\
   &\le4l^{-1}N^{-\xi K_0+1}\frac{N^{\xi(K_0-1)}-N^K}{N^{\xi}-1}N^{\xi}\ta_{K_0}\le 8l^{-1}N^{1-\xi}\ta_{K_0}\le\tilde{a}_{K_0},
    \end{split}
\end{align} 
    where $K<K_0\le2K$ is defined as before.
    The smallest possible $K_0$ is $K+1$, which yields the new lower bound \eqref{bound11} for the length of the interval in which a suitable dilation variable $\alpha$ exists. 
\end{proof}

\begin{cor}
     Let $\vartheta>1$ and $\{a_n\}_{n\le N}$ be a set such that:
    $$a_n\ge N^{\vartheta}a_{n-1},$$
    for $2\le n\le N$. Then there exists an $\alpha\in\R$ such that:
     $$G(\{\alpha a_n\}_{n\le N})\ll\frac{1}{N}.$$
\end{cor}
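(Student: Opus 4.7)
The plan is to reuse the machinery from the proof of Theorem \ref{main} but to skip the thinning step entirely. Under the much stronger growth assumption $a_n\ge N^\vartheta a_{n-1}$ with $\vartheta>1$, the original sequence $\{a_n\}_{n\le N}$ already plays the role that the sparsified sequence $\{\tilde a_n\}_{n\le K}$ played there. Consequently both logarithmic losses in that proof — the factor of $\log N$ from grouping blocks and the corresponding enlargement of $\varepsilon$ — disappear, and we gain exactly one $\log N$ in the final bound, landing at $1/N$.

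Concretely, I would apply Tur\'an's Theorem \ref{turan} directly with $K:=N$, equidistant targets $x_n:=n/N$ on the torus, and common tolerance $\varepsilon_n:=1/(2N)$. This gives the uniform bound
$$M=\left\lceil 2N\log(2N^2)\right\rceil\ll N\log N.$$
To verify $\delta_M>0$, I would rerun the argument from \eqref{estimate}: for any nonzero $(m_1,\dots,m_N)\in\Z^N\cap[-M,M]^N$, letting $K_0$ be the largest index with $m_{K_0}\ne 0$, the bound $a_j\le N^{-\vartheta(K_0-j)}a_{K_0}$ and a geometric-series estimate yield
$$\sum_{j=1}^{K_0-1}|m_j a_j|\le\frac{2M}{N^\vartheta}\,a_{K_0}.$$
Since $M/N^\vartheta\ll (\log N)/N^{\vartheta-1}\to 0$ for $\vartheta>1$, the last nonzero term dominates and $\left|\sum_j m_j a_j\right|\ge a_{K_0}/2>0$ for all sufficiently large $N$.

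Tur\'an's theorem then produces an $\alpha\in\R$ with $\|\alpha a_n - n/N\|\le 1/(2N)$ for every $n\le N$. The disjoint arcs of length $1/N$ centred at the $x_n=n/N$ each contain exactly one point of $\{\alpha a_n\}_{n\le N}$, so consecutive sorted fractional parts differ by at most $1/N+2\cdot 1/(2N)=2/N$, yielding $G(\{\alpha a_n\}_{n\le N})\le 2/N\ll 1/N$. The only quantitative check required is $M/N^\vartheta\to 0$, which is automatic for $\vartheta>1$; no substantive new difficulty arises beyond the book-keeping already carried out in the proof of Theorem \ref{main}.
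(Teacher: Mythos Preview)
Your proposal is correct and follows essentially the same approach as the paper: both skip the thinning step, apply Tur\'an's Theorem \ref{turan} directly to the full set $\{a_n\}_{n\le N}$ with $\varepsilon\asymp 1/N$, bound $M\ll N\log N$, and use the stronger growth $a_n\ge N^{\vartheta}a_{n-1}$ to make the last nonzero term dominate the geometric tail. Your write-up is in fact slightly cleaner than the paper's, which contains a minor slip (it writes $\xi$ where it means $\vartheta$) and a somewhat garbled expression for $M$.
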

\begin{proof}
    We repeat the same procedure as before. In particular we adapt equations \eqref{Mest1} and \eqref{estimate} (where we set $l=1$). For $\varepsilon=1/N$ it follows that
    $$M=4N\log\left(\frac{\sqrt{2}N}{\log(N)}\right)\le 4N\log(N),$$
    for $N$ sufficiently large.
Similar to equation \eqref{estimate} we have 
$$\sum_{j=1}^{K_0-1}m_ja_j\le4N\log(N)\sum_{j=1}^{K_0-1} a_j\le 8l^{-1}N^{1-\xi}\log(N)a_{K_0}\le a_{K_0},$$
for $N$ sufficiently large, where the constant $K_0$ is defined as before. We use Tur\'an's localized and quantitative Kronecker Theorem \ref{turan} to get the result.
\end{proof}

\section{Small dispersion for a suitable fixed dilation parameter. Proof of Theorem \ref{main2}.}

In the previous section we showed that for a set of lacunary type $\{a_n\}_{n\le N}$ with fixed cardinality we can always find a number $\alpha\in\R$ such that the dilated set $\{\alpha a_n\}_{n\le N}$ has a maximal gap of size at most $\log(N)/N$, up to constants. Note that in this statement, $\alpha = \alpha(N)$ is allowed to depend on the cardinality $N$ of the set. In this section we show that for an infinite lacunary sequence $(a_n)_{n\in\N}$ we can find a number $\alpha\in\R$ such that the first $N$ elements of the dilated set $\{\alpha a_n\}_{n\le N}$ have a maximal gap of size at most $\log(N)/N$, up to  constants, for all sufficiently large $N$; here $\alpha$ is not allowed to depend on the cardinality of the set anymore. We will use Theorem \ref{main} together with a nested interval argument to prove this.

\begin{proof}[Proof of Theorem \ref{main2}.]
We first briefly explain the idea of the proof. We consider a lacunary sequence and look at the first $N_k:=4^k$ elements, where we find an $\alpha_1$ such that the desired upper bound holds, provided that $k$ is large enough. Then we find a full interval $\tilde{I}_1$ around $\alpha_1$, such that every real number in $\tilde{I}_1$ satisfies the desired result. Next, we consider the elements between $N_{k+1}$ and $2N_{k+1}$ of the lacunary sequence, where we have that the dilation variable $\alpha_2$, that satisfy the result, can be chosen from an interval $I_1\subseteq \tilde{I}_1$. This means $\alpha_2$ satisfies the result for the first $N_k$ and $N_{k+1}$ elements of the lacunary sequence. By repeating this we find infinitely many numbers in $\N$ such that the same $\alpha$ satisfies the condition. An interpolation argument yields the result for all $N\in\N$, (up to finitely many).\\   
%Let without loss of generality $a_n=ra_{n-1}$, since for $a_n\ge ra_{n-1}$ everything is the same up to product as in \eqref{lacproperty} in the denominator of most of the inequalities.  
    We fix an $k$ sufficiently large such that there exists an $\alpha_1$ with 
    \begin{equation}\label{gap1}
       G(\{\alpha_1  a_n\}_{n\le N_k})\ll \frac{\log(N_k)}{N_k}, 
    \end{equation}
    by the first main Theorem \ref{main}. This means that for every $a_i\in\{a_n\}_{n\le N_k}$ there exists a $x_j$ in a set of $\lfloor N_k/\log(N_k)\rfloor$ equidistant points on the unit torus, such that the distance is less than $\log(N_k)/N_k$ up to constants. %  Let $\{\tilde{a}_n\}_{n\le K_M}\subset\{a_n\}_{n\in M}$ be defined as before, where in particular 
In particular for $\mathfrak{a}_1:=\alpha_1-\tau_1$, $\mathfrak{a}_2:=\alpha_1+\tau_1$, $\tau_1=\frac{\log(N_k)}{N_k\cdot a_{N_k}}$,
   \begin{align}\label{interval}
   \begin{split}
    &\ \ \ \ ||\mathfrak{a}_m a_i-x_j||=||(\alpha_1\pm \tau_1) a_n-x_n||\\
&\le||\alpha_1a_n-x_n||+||\tau_1 a_n||\stackrel{\textnormal{\ref{main}}}{\ll}\frac{\log(N_k)}{N_k}+\tau_1 a_{N_k}\\
&=\frac{\log(N_k)}{N_k}+\frac{\log(N_k)}{N_ka_{N_k}}a_{N_k}\ll \frac{\log(N_k)}{N_k}   
   \end{split}       
   \end{align}
yields, that the above equation $\eqref{gap1}$ holds for all numbers in $\tilde{I}_1:=[\mathfrak{a}_1,\mathfrak{a}_2]$, where $|\tilde{I}_1|=\frac{2\log(N_k)}{N_k\cdot a_{N_k}}$. 
Let us now consider the the set $\{a_n\}_{N_{k+1}<n\le 2N_{k+1}}$. By Corollary \ref{translated},
    \begin{equation}\label{gap2}
       G(\{\alpha_2  a_n\}_{N_{k+1}\le n\le 2N_{k+1}})\ll \frac{\log(N_{k+1})}{N_{k+1}}, 
    \end{equation}
    holds for an $\alpha_2$ in every interval $I_1$ of length $|I_1|=4/a_{N_{k+1}}$.
    Then, we can choose $I_1$ to be inside of $\tilde{I}_1$, since
   \begin{equation}\label{subsetinterval}
    |I_1|=\frac{4}{a_{N_{k+1}}}\le\frac{4}{r^{3\cdot4^{k}}a_{N_{k}}}
    \le\frac{\log(4^k)}{4^{k} a_N}=\frac{|\tilde{I}_1|}{2},
\end{equation}
for $k$ sufficiently large.
    In particular $\alpha_2\in I_1\subseteq \tilde{I}_1$ such that 
    $$
     G(\{\alpha_2  a_n\}_{n\le N_k})\ll \frac{\log(N_k)}{N_k}
    $$
    as well as
    \begin{equation}\label{gap1.5}
      G(\{\alpha_2  a_n\}_{N_{k+1}<n\le 2N_{k+1}})\ll \frac{\log(N_{k+1})}{N_{k+1}}. 
    \end{equation}
By the same argument as before, see equation \eqref{interval}, there exists an interval $I_2$ such that \eqref{gap1.5} holds for every number in $\tilde{I}_2=(\alpha_2-\tau_2,\alpha_2+\tau_2)$, $\tau_2=\frac{\log(N_{k+1})}{N_{k+1}\cdot a_{N_{k+1}}}$. By $\alpha_2\in I_1$ at least half of the interval $\tilde{I}_2$ is either inside of $I_1$ or covers it.
Next consider the set $\{a_n\}_{N_{k+2}< n\le 2N_{k+2}}$ for which we get a solution $\alpha_3$ in every interval $I_2$  of length $|I_2|=4/a_{N_{k+2}}$. By a simple induction argument, equation \eqref{subsetinterval} implies $|I_2|\le|\tilde{I}_2|/2$. Thus we find an $\alpha_3$ such that
\begin{equation}\label{gap4.5}
       G(\{\alpha_3  a_n\}_{N_{\tilde{k}}<n\le 2N_{\tilde{k}}})\ll \frac{\log(N_{\tilde{k}})}{N_{\tilde{k}}},
    \end{equation}
    with $\tilde{k}=k,k+1,k+2$.
This procedure can be repeated infinitely many times. Hence we get a sequence of intervals of exponentially decreasing length, that are nested inside each other.
%and include a variable $\alpha$, yielding the maximal bound, for dilated sets of cardinality $N_k,$ for all $k\in\N$ up to finitely many. 
Thus we find a real number $\alpha_{\infty},$ which is defined by
$$\bigcap^{\infty}_{n=1}I_n=\{\alpha_{\infty}\},$$
such that for all positive integers $N_j$, $j\in\N$ up to finitely many, we have
\begin{equation}\label{gapj}
       G\left(\{\alpha_{\infty}  a_n\}_{N_j<n\le 2N_j}\right)\ll \frac{\log(N_j)}{N_j}. 
    \end{equation}
Furthermore, since adding points can only decrease the gap, we have:
\begin{equation}\label{interindex}
 G\left(\{\alpha_{\infty}  a_n\}_{n\le 2N_j}\right)\leq G\left(\{\alpha_{\infty}  a_n\}_{N_j<n\le 2N_j}\right)\ll\frac{\log(N_j)}{N_j}\simeq\frac{\log(2N_j)}{2N_j}.   
\end{equation}
Substituting $2N_j$ by $M_j$ allows to consider a set of lacunary type starting at index $1$.\\   
For all the other positive integers, we get the result immediately from the following interpolation argument: For every $N\ge M_k$, there exists an $m\in\N$ such that
$M_m\le N\le M_{m+1}$, which yields $$\{\alpha  a_n\}_{n\le M_m}\subseteq\{\alpha  a_n\}_{n\le N}\subseteq\{\alpha a_n\}_{n\le M_{m+1}}.$$
Again, since adding more points can only decrease the maximal gap-size, we have
\begin{equation}
    G(\{\alpha  a_n\}_{n\le N})\ll \frac{\log(M_m)}{M_m}\simeq \frac{\log( 4^{m+1})}{ 4^{m+1}}=\frac{\log(M_{k+1})}{M_{k+1}}\le\frac{\log(N)}{N}.
\end{equation}

\end{proof}

\section{The metric problem in the case of the Lebesgue measure. Proof of Theorem \ref{main3}.}

In this section we investigate the maximal gap for almost all $\alpha\in[0,1]$, see Theorem \ref{main3}.
 The upper bound we obtain is worse than the bound in the existence result in Theorem \ref{main2}, but we have a stronger result with regard to $\alpha$. Theorem \ref{main3} is an improvement of the existing results in \cite{chow2023dispersion}. To prove the theorem, we will use compactly supported $C^{\infty}$-functions that are non-zero only in an open interval of length $\log(N)^{2+\varepsilon}/N$ centered around $N/\log(N)^{2+\varepsilon}$ the equidistant points. This means these functions locate points of the given set $\{\alpha a_n\}_{n\le N}$ in a neighbourhood of the just mentioned size.
 Some Fourier analysis, together with an application of Markov's inequality and the first Borel-Cantelli Lemma, yields that these functions iare positive for almost all $\alpha\in[0,1]$ and $N$ sufficiently large, which in particular means that the inequality $\eqref{introeq}$ is true for almost all $\alpha\in[0,1]$.

\subsection{Some prerequisites}\label{pre2}
For clarity of exposition, we start by first only considering the Lebesgue measure, which we denote as $\lambda$. Hence when writing down an integral, for the moment it is always understood to be taken with respect to the Lebesgue measure. We write $\mathcal{S}(\mathbb{R})$ for the Schwartz-space and $C^{\infty}_I$ for the space of arbitrarily often continuously differentiable functions with support on an interval $I$. We define the Fourier Transform $\mathcal{F}:\mathcal{S}(\R)\rightarrow \mathcal{S}(\R)$, that is in particular an automorphism, as follows: $\mathcal{F}f(x):=\int_{\R}f(\xi)e^{-2\pi ix\xi}d\xi$, see \cite{fourier} for details and basic properties. We remark $C_I^{\infty}\subset\mathcal{S}$ and that every $f\in\mathcal{S}$ is bounded. For $f\in\mathcal{S}$ we recall a version of the Poisson summation formula:
\begin{equation}\label{pois}
   \sum_{n\in\Z}f((\nu+n)T)=\frac{1}{T} \sum_{k\in\Z} \mathcal{F}f\left(\frac{k}{T}\right)e^{-2\pi ik\nu}, 
\end{equation}
see e.g. \cite{gröchenig2001foundations}. Next we define the $L^p([0,1])-$norm for $1\le p<\infty$ as: $||f||_{L^p}:=\left(\int_0^1|f|^p d\lambda\right)^\frac{1}{p}$. The functions  $\sqrt{2}\cos(2\pi kx)$, $\sqrt{2}\sin(2\pi kx)$ $k\in\Z$ are orthonormal in $L^2([0,1]):=\{f\ \textnormal{measurable}: ||f||_{L^2}<\infty\}$. We further recall a version of Markov's inequality: Let $f$ be measurable, then
\begin{equation}\label{Markov'slambda}
     \lambda(\{\alpha\in [0,1]\,:\,\,|f(\alpha)|\geq t\}) \leq \left(\frac{||f||_p}{t} \right)^p  
\end{equation}
Finally, we need a consequence of the first Borel Cantelli Lemma: Let $f_n$ measurable, $(X_n)_{n\in\N}\subset\R^+$ an increasing sequence and assume that 
\begin{equation}\label{bClambda}
    \sum_{n=1}^\infty \lambda(\{\alpha\in [0,1]: | f_n(\alpha) - X_n | > X_n/2\} ) < \infty.
\end{equation}
Then $f_n(\alpha)>0$ for almost all $\alpha\in [0,1]$ and all $n\in\N$ sufficiently large. These are all standard results, that can be found in e.g. \cite{durrett2019probability}.
\begin{rem}
    Markov's inequality and the first Borel Cantelli Lemma are still true for any other Borel-measure.
\end{rem}

%\begin{rem}\label{mainrem}
%    This proof is exactly the same, when considering elements of a lacunary sequence with index $kN<(k+1)N$ for every $k\in\N_{k+1}$.
%\end{rem}

\textbf{Proof of Theorem \ref{main3}:}
\begin{proof}
\textbf{Step 1: Some Fourier Analysis:}
Let $\varepsilon>0$ and $\{a_n\}_{n\le N}$ be a set of lacunary type with subset $\{\tilde{a}_n\}_{n\le K}$, where $K=N/l\log(N)^{1+2\varepsilon}$, $l=\max\{1,\lceil1/\log(r)\rceil+1/2\}$ and $\ta_n=a_{nl\log(N)^{1+2\varepsilon}}$.
%  $$\tilde{a}_n=r^{nl\log(N)^{1+2\varepsilon}}\prod_{i=0}^{nl\log(N)^{1+2\varepsilon}}c_i,$$
%   with $r>1$,
   %and $c_0>0,c_1,...,c_K\ge1$.
   We remark that the set $\{\tilde{a}_n\}_{n\le K}$ is linearly independent in $[-K,K]\cap\Z$ for $N$ sufficiently large. This follows from 
   \begin{equation}\label{linindep}
        \sum_{j=1}^{K_0-1} 4l^{-1}K\tilde{a_j}< \tilde{a}_{K_0},
   \end{equation}
   for every $K_0\in[2,K]\cap\Z$, by the same calculation as in equations \eqref{estimate}.
   Without loss of generality let $l=1$, since everything can be directly adapted to the case $l\ne 1$. 
Let $f\in C_{[-1,1]}^{\infty}$ real and even, such that $\int_{\R}f(x)dx=1$. This in particular means, that $Ff$ is real and even, $\mathcal{F}f(0)=1$, and $|\mathcal{F}f|\le1$. We set $Q:=\log(N)^{1+2\varepsilon}$, $M:=\log(N)^{2+4\varepsilon}=Q^2$, $P:=\log(N)^{2+3\varepsilon}$, $R:=\log(N)^{\varepsilon}=M/P$ and define the following function, which counts points in intervals of length $M/N$:
$$\omega_{N,t}(\alpha):=\sum_{u\in\Z}\sum_{n=1}^{N/Q}f\left(\frac{\alpha \tilde{a}_n-t-u}{\frac{M}{N}}\right).$$
The sum over $u$ allows us to consider the rational part; the denominator dilates the function to count only points within an interval of length $N/M$, and $t$ translates the interval. Using Poisson summation formula \eqref{pois} we get:
$$\omega_{N,t}(\alpha)=\frac{M}{N}\sum_{k\in\Z}\sum_{n=1}^{N/Q}(\mathcal{F}f)\left(\frac{M}{N}k\right)e^{2\pi ik(\alpha \tilde{a}_n-t)}.$$
We now define:
\begin{align}
    \begin{split}
       \tilde{\omega}_{N,t}(\alpha)&:=\frac{M}{N}\sum_{k\in\Z}\sum_{n=1}^{N/Q}(\mathcal{F}f)\left(\frac{M}{N}k\right)e^{2\pi ik(\alpha \tilde{a}_n-t)}-Q\\\
&=\frac{M}{N}\sum_{k\in\Z\backslash\{0\}}\sum_{n=1}^{N/Q}(\mathcal{F}f)\left(\frac{M}{N}k\right)e^{2\pi ik(\alpha \tilde{a}_n-t)}\\\
%&=\frac{M}{N}\sum_{k\in\Z\backslash\{0\}}\sum_{n=1}^{N/Q}(\mathcal{F}f)\left(\frac{M}{N}k\right)\cos{2\pi k(\alpha \tilde{a}_n-t)} 
    \end{split}
\end{align}
Here we used that $(\mathcal{F}f)(0)=0$ and $M/Q=Q$. 
We truncate the first sum and work instead with
\begin{align}
    \begin{split}
       \omega^{*}_{N,t}(\alpha)&:=\frac{M}{N}\sum_{0<|k|\le N/P}\sum_{n=1}^{N/Q}(\mathcal{F}f)\left(\frac{M}{N}k\right)e^{2\pi ik(\alpha \tilde{a}_n-t)},\\\
&=\frac{M}{N}\sum_{0<|k|\le N/P}\sum_{n=1}^{N/Q}(\mathcal{F}f)\left(\frac{M}{N}k\right)\cos(2\pi k(\alpha \tilde{a}_n-t)) 
    \end{split}
\end{align}
where by the rapid decay of $\mathcal{F}f$ (this follows by $\mathcal{F}$ being an automorphism and the definition of Schwartz-functions), we have
$$\tilde{\omega}_{N,t}(\alpha)=\omega^{*}_{N,t}(\alpha)+O(\log(N)^{-100}).$$
Define $p_k:=(\mathcal{F}f)\left(\frac{M}{N}k\right)$, $\tilde{p}_k:=p_k\cos(2\pi kt)$ and $p^*_k:=p_k\sin(2\pi kt)$. In particular $|p_k|,|\tilde{p}_k|,|p^*_k|\le1$. 
The next step is to estimate the following integral, which will be useful later, when using the first Borel-Cantelli-Lemma.

\begin{align}\label{lebesguemeasure}
\begin{split}
   &\ \ \ \ \ ||e^{\frac{1}{10R}\omega^*_{N,t}}||_{L^1}=\int_0^1\exp\left(\frac{1}{10R}\omega^*_{N,t}(\alpha)\right)d\lambda(\alpha)\\\
&=\int_0^1\prod_{n=1}^{\frac{N}{Q}}\exp
\left(\frac{1}{10R}\frac{M}{N}\sum_{0<|k|\le\frac{N}{P}}p_k\Bigl(\cos(2\pi k\tilde{a}_n\alpha)\cos(2\pi kt)+\sin(2\pi k\tilde{a}_n\alpha)\sin(2\pi kt)\Bigl)\right)\\\
&\le\int_0^1\prod_{n=1}^{\frac{N}{Q}}\Biggl(1+\frac{1}{10R}\frac{M}{N}\sum_{0<|k|\le\frac{N}{P}}\Bigl(\tilde{p}_k\cos(2\pi k\tilde{a}_n\alpha)+p^*_k\sin(2\pi k\tilde{a}_n\alpha)\Bigr)\\\
&+\biggl(\frac{1}{10R}\frac{M}{N}\sum_{0<|k|\le\frac{N}{P}}\bigl(\tilde{p}_k\cos(2\pi k\tilde{a}_n\alpha)+p^*_k\sin(2\pi k\tilde{a}_n\alpha)\bigr)\biggr)^2\Biggr)d\lambda(\alpha)\\\
%&=\prod_{n=1}^{\frac{N}{Q}}\Biggl(\int_0^11+\frac{1}{10R}\frac{M}{N}\sum_{0<|k|\le\frac{N}{P}}\Bigl(\tilde{p}_k\cos(2\pi k\tilde{a}_n\alpha)+p_k^*\sin(2\pi k\tilde{a}_n\alpha)\Bigr)\\\
%&+\biggl(\frac{1}{10R}\frac{M}{N}\sum_{0<|k|\le\frac{N}{P}}(\tilde{p}_k\cos(2\pi k\tilde{a}_n\alpha)+p_k^*\sin(2\pi k\tilde{a}_n\alpha)\biggr)^2d\lambda(\alpha)\Biggr)\\\
&\le\prod_{n=1}^{\frac{N}{Q}}\Biggl(\biggl(1+\frac{1}{100R^2}\biggl(\frac{M}{N}\biggr)^2\sum_{0<|k|\le\frac{N}{P}}\max\{\tilde{p}_k,p^*_k\}^2\Biggr)\\\
&\le\exp\left(\sum_{n=1}^{\frac{N}{Q}}\frac{1}{100R^2}\frac{M^2}{N^2}\sum_{0<|k|\le\frac{N}{P}}\max\{\tilde{p}_k,p^*_k\}^2\right)\le\exp\left(\frac{Q}{50R}\right). 
\end{split}
\end{align}
Here we could remove the absolute values in the first line, since $\omega^*_{N,c}$ is real, hence $\exp\left(\frac{1}{10R}\omega^*_{N,t}\right)$ is positive.
The estimate from the second to the third line is an application of the Taylor-series of exponential functions; in detail $|x|\le 1$ implies $e^x\le 1+x+x^2$, and
$$\left|\frac{1}{10R}\frac{M}{N}\sum_{0<|k|\le\frac{N}{P}}p_k\cos(2\pi k(\alpha \tilde{a}_n-t))\right|\le\frac{1}{10R}\frac{M}{N}2\frac{N}{P}=\frac{1}{5}<1.$$
By the linear independence of $\{\tilde{a}_n\}_{n\le K}$ in $[-K,K]=[-N/M,N/M]$, the $L^2[0,1]$-orthonormality property and basic trigonometric identities, the integrals over products of cosines and sines with other cosines and sines with much larger frequencies vanish, which yields the penultimate line. The last line follows from the Bernoulli-inequality and trivial estimates. We note that Chow and Technau \cite{chow2023dispersion} worked with $L^p$ moments for large $p=p(N)$, following the work of Rudnick and Zaharescu \cite{rz}. The use of exponential integrals in our argument (essentially a moment-generating function in the language of probability theory), together with the application of Taylor's formula, effectively reduces the whole calculation to an $L^2$ argument, which avoids the very delicate combinatorial problems which arise in \cite{chow2023dispersion,rz}.\\

\textbf{Step 2: Some Measure Theory:}
The calculation above has been carried out in order to allow an application of Markov's inequality and utilize the first Borel-Cantelli-Lemma. We have
\begin{align}\label{measure}
\begin{split}
    &\ \ \ \ \lambda(\{\alpha\in [0,1]\,:\,\omega_{N,t}(\alpha)-Q\geq \frac{Q}{2}\})\\\
    &\le\lambda(\{\alpha\in [0,1]\,:\,\omega^{*}_{N,t}(\alpha)\geq \frac{Q}{4}\})\\\ 
    &=\lambda(\{\alpha\in [0,1]\,:\,\exp(\frac{1}{10R}\omega^{*}_{N,t}(\alpha))\geq \exp(\frac{Q}{40R})\})\\\
    &\le \frac{||e^{\frac{1}{10R}\omega^*_{N,t}}||_{L^1}}{e^{\frac{Q}{40R}}}\le e^{-\frac{Q}{200R}}=e^{-\frac{1}{200}\log(N)^{1+\varepsilon}}
  \end{split}
\end{align}
Following this, we consider \(N/\log(N)^{2+4\varepsilon}\) different translation variables \(t_j\) such that the center of \(\text{supp}(w^*_{N,t_j})\) is \(j \log(N)^{2+4\varepsilon}/N\), for \(j = 1, \ldots, N/\log(N)^{2+4\varepsilon}\), which are the centers of open intervals of length \(N/\log(N)^{2+4\varepsilon}\). Note, that these intervals are disjoint.
Since
$$\sum_{N\in\N}\sum_{j=1}^{\frac{N}{\log(N)^{2+4\varepsilon}}}\lambda(\{\alpha\in [0,1]\,:\,\omega_{N,t_j}(\alpha)-Q\geq \frac{Q}{40}\})\le\sum_{N\in\N}\frac{Ne^{-\frac{1}{200}\log(N)^{1+\varepsilon}}}{\log(N)^{2+4\varepsilon}}<\infty,$$
the first Borel Cantelli Lemma yields that for all sufficiently large $N$ and for all \\
$j=1,...,N/\log(N)^{2+4\varepsilon}$
$$\omega_{N,t_j}(\alpha)>0,$$
for almost all $\alpha\in[0,1]$. In other words, for sufficiently large $N$ every interval of length $\log(N)^{2+4\varepsilon}/N$ centered at of the points $t_j,$ $j=1,...,N/\log(N)^{2+4\varepsilon}$, contains at least one point of $\{\tilde{a}_n\}_{n\le N}$. Hence
\begin{equation}
    G(\{\alpha  \tilde{a}_n\}_{n\le K})\leq \frac{\log(N)^{2+4\varepsilon}}{N},
\end{equation}
for $N$ sufficiently large and for almost all $\alpha\in[0,1]$. As before, adding points can only decrease the maximal gap, which implies:
\begin{equation}\label{endresult}
    G(\{\alpha  a_n\}_{n\le N})\leq \frac{\log(N)^{2+4\varepsilon}}{N},
\end{equation}
for $N$ sufficiently large and for almost all $\alpha\in[0,1]$. Since $\varepsilon>0$ was arbitrary, this proves the theorem.
\end{proof}

\section{The metric problem in the case of general measures. Proof of Theorem \ref{main35}.} 

%If $A$ is a set, then $\mathcal{P}(A)$ denotes its power set. 
Again we write $\lambda$ for the Lebesgue measure. For a Borel-set $\mathcal{B}\subseteq\R$ we define $\mathcal{M}(\mathcal{B})$ as the set of Borel measures supported on a compact subset of $\mathcal{B}$. In the following, when writing a measure $\mu$, it will always be in $\mathcal{M}([0,1])$. We define the Fourier Transform of a measure as $$(\mathcal{F}\mu)(x):=\int_0^1e^{-2\pi i\xi x}d\mu(\xi).$$
The $L^p([0,1],\mu)$-norm is denoted as $||.||_{L^p(\mu)}$ and defined as before, we just exchange the Lebesgue measure $\lambda$ with a general measure $\mu$.\\

%In this last section we use the previous result to prove Theorem \ref{little}. For this we need first to prove Theorem \ref{main35}, which is an adaptation of Theorem \ref{main3}. Afterwards we need few auxiliary properties. The first one is a direct consequence of the just mentioned Theorem; the second one is a statement we cite from the paper \cite{chow2023dispersion}, by Chow and Zafeiropoulos and third we use the this two to get some result on the measure. To prove the inhomogeneous version of the Littlewood conjecture \ref{little} we use a measure constructed by Kaufman \cite{Kaufman_1980} which satisfies all assumptions and yields the result.\\

\textbf{Proof of Theorem \ref{main35}}
\begin{proof}
The idea is very similar to the proof of Theorem \ref{main3}. 
Let $\varepsilon>0$ and $\{a_n\}_{N<n\le 2N}$ be a set of lacunary type with subset $\{\tilde{a}_n\}_{K<n\le 2K}$, $\tilde{a}_n=a_{nl\log(N)}$, where $K=N/l\log(N)^{1+2\varepsilon}$
   with $r>1$ and $l=\max\{1,\lceil1/\log(r)\rceil$+1/2\}
   %, and $c_0>0,c_1,...,c_K\ge1$.
%Let $\{\tilde{a}_n\}_{K<n\le 2K}$ with
 %  $$\tilde{a}_n=cr^{nl\log(N)^{1+\varepsilon}},$$
 %  where $K=N/\log(N)^{1+\varepsilon}$ and $r>1$, $l>1/\log(r)$.
%Choosing the right $c\in\R^+$ makes this a subset of $\{a_n\}_{N<n\le 2N}$ from the statement. 
Again without loss of generality let $l=1$.
Let $M,Q,P,R,p,\tilde{p},p^*$ be the same as before. We define $\omega_{N,t}$, $\tilde{\omega}_{N,t}$ and $\omega^*_{N,t}$ identically, except for the sequence inside, which now must be $\{\tilde{a}_n\}_{K<n\le 2K}$. First recall the Fourier-series expansion of 
$$e^{\frac{1}{10R}\omega^*_{N,t}(\alpha)}:=\frac{q_0}{2}+\sum_{m\in\N}q_m\cos(2\pi m\alpha)+\sum_{m\in\N}\tilde{q}_m\sin(2\pi m\alpha),$$
where
$$ q_m=\int_0^1e^{\frac{1}{10R}\omega^*_{N,t}(x)}\cos(2\pi mx)d\lambda(x),\quad \tilde{q}_m=\int_0^1e^{\frac{1}{10R}\omega^*_{N,t}(x)}\sin(2\pi mx)d\lambda(x).$$
With this in hand we get:
\begin{align}
    \begin{split}
        &||e^{\frac{1}{10R}\omega^*_{N,t}}||_{L^1(\mu)}=\left|\int_0^1e^{\frac{1}{10R}\omega^*_{N,t}(\alpha)}d\mu(\alpha)\right|\\\
        \ll_{\mu}&\left|\frac{q_0}{2}\right|+\sum_{m\in\N}\left|q_m\right|\left|\int_0^1\cos(2\pi m\alpha)d\mu(\alpha)\right|+\sum_{m\in\N}\left|\tilde{q}_m\right|\left|\int_0^1\sin(2\pi m\alpha)d\mu(\alpha)\right|\\\
        \le&\left|\frac{q_0}{2}\right|+\sum_{m\in\N}\left|q_m\right|\left(1+|m|\right)^{-\Upsilon}+\sum_{m\in\N}\left|\tilde{q}_m\right|\left(1+|m|\right)^{-\Upsilon}
    \end{split}
\end{align}
%\tiny
%$$=\left|\frac{q_0}{2}\right|+\sum_{m\in\N}\left|q_m\right|\left|\int_0^1\frac{e^{2\pi i m\alpha}+e^{-2\pi i m\alpha}}{2}d\mu(\alpha)\right|+\sum_{m\in\N}\left|\tilde{q}_m\right|\left|\int_0^1\frac{e^{2\pi i m\alpha}-e^{-2\pi i m\alpha}}{2i}d\mu(\alpha)\right|$$
%\normalsize
%$$\le\left|\frac{q_0}{2}\right|+\left|\sum_{m\in\N}\int_0^1e^{\frac{1}{10R}\omega^*_{N,t}(\alpha)}\cos(2\pi mx)d\lambda(x)\right|\left(1+|m|\right)^{-\Upsilon}+\left|\sum_{m\in\N}\int_0^1e^{\frac{1}{10R}\omega^*_{N,t}(\alpha)}\sin(2\pi mx)d\lambda(x)\right|\left(1+|m|\right)^{-\Upsilon}$$
%We can write $\cos(2\pi mx)$ instead of a $e^{2\pi imx}$, since an exponentiated even function is even.
Here we used the assumption on the Fourier decay of $\mu$, which was part of the statement of Theorem \ref{main35}.
The first term can be estimated by the literally same procedure as in equation \eqref{lebesguemeasure}.
Our conclusion stems from the same reasoning for both sums, hence it is sufficient to only consider $\sum_{m\in\N}q_m$.
We split it into two parts, according to weather $|m|\le a_N$ or $|m|>a_N$. In the first partial sum we estimate in a very similar way as before. 
\begin{align}
    \begin{split}
        &\sum_{|m|\le a_N}\int_0^1e^{\frac{1}{10R}\omega^*_{N,t}(\alpha)}\cos(2\pi m\alpha)d\lambda(x)\left(1+|m|\right)^{-\Upsilon}\\\
        \ll_{\mu}&\sum_{|m|\le a_N}\int_0^1\prod_{n=K+1}^{\frac{2N}{Q}}\Biggl[1+\frac{1}{10R}\frac{M}{N}\sum_{0<|k|\le\frac{N}{P}}\Bigl(\tilde{p}_k\cos(2\pi k\tilde{a}_n\alpha)+p_k^*\sin(2\pi k\tilde{a}_n\alpha)\Bigr)\\\
        &+\biggl(\frac{1}{10R}\frac{M}{N}\sum_{0<|k|\le\frac{N}{P}}\Bigl(\tilde{p}_k\cos(2\pi k\tilde{a}_n\alpha)+p_k^*\sin(2\pi k\tilde{a}_n\alpha)\Bigr)\biggr)^2\Biggr]\cos(2\pi m\alpha)d\lambda(\alpha)\\\
        \le&\exp\left(\sum_{n=1}^{\frac{N}{Q}}\frac{1}{100}\frac{M^2}{N^2}\sum_{0<|k|\le\frac{N}{P}}p_k^2\right)\le\exp\left(\frac{1}{50}Q\right),
    \end{split}
\end{align}
Since the smallest value $\tilde{a}_n$ can attain is $\tilde{a}_{K+1}=a_{N+\log(N)}>a_N$, and is thus larger than the largest value $m$ can attain, and since we have linear independence by the same argument as in equation \eqref{linindep}, the functions
$\cos(2\pi m\alpha)$, with $m\le a_N$ have a too small frequency for orthogonality in $L^2([0,1])$ to fail. This means we can proceed as before, see equation $\eqref{lebesguemeasure}$, to get a similar estimate. \\  
In the second partial sum we will make use of the rapid decay of $(\mathcal{F}\mu)(-m)$:
\begin{align}
    \begin{split}
        &\sum_{|m|>a_N}\int_0^1e^{\frac{1}{10R}\omega^*_{N,t}(\alpha)}\cos(2\pi m\alpha)d\lambda(x)\left(1+|m|\right)^{-\Upsilon}\\\
\ll&\sum_{|m|>a_N}\int_0^1\prod_{n=K+1}^{\frac{2N}{Q}}\Biggl[1+\frac{1}{10R}\frac{M}{N}\sum_{0<|k|\le\frac{N}{P}}\Bigl(\tilde{p}_k\cos(2\pi k\tilde{a}_n\alpha)+p_k^*\sin(2\pi k\tilde{a}_n\alpha)\Bigr)\\\
&+\biggl(\frac{1}{10R}\frac{M}{N}\sum_{0<|k|\le\frac{N}{P}}\bigl(\tilde{p}_k\cos(2\pi k\tilde{a}_n\alpha)+p_k^*\sin(2\pi k\tilde{a}_n\alpha)\bigr)\biggr)^2\Biggr]\cos(2\pi m\alpha)d\lambda(\alpha)a_N^{-\Upsilon}
    \end{split}
\end{align}
Multiplying out the above product we get at least $7^{N/Q}$ terms, all of which have factors either $1$, $$\frac{1}{10R}\frac{M}{N}\sum_{0<|k|\le\frac{N}{P}}\tilde{p}_k\cos(2\pi k\tilde{a}_n\alpha)\quad\textnormal{or}\quad\frac{1}{10R}\frac{M}{N}\sum_{0<|k|\le\frac{N}{P}}p_k^*\sin(2\pi k\tilde{a}_n\alpha),$$ 
%$$\left(\frac{1}{10R}\frac{M}{N}\right)^{|E|}\sum_{0<|k_{1,j}|\le\frac{M}{N},0\le|k_{2,j}|\le\frac{M}{N}}\exp\left(2\pi i \sum_{n\in E}(k_{1,n}+k_{2,n})\tilde{a}_{K+n}\right),$$
%where $j\in E$ for all $E\in\mathcal{P}(\{1,...,N/Q\})$ (or $1$). 
for $n\in\{K+1,...,2N/Q\}$.
%that consists purely out of (multiple) $p_j'$, $\sin(2\pi k\tilde{a}_j\alpha)$s or $\cos(2\pi k\tilde{a}_n\alpha)$ for different $n'$s. 
Upon multiplying out also the sums in $k$, we obtain at most $7^{N/Q}(2N/P)^{2N/Q}$ terms. Consequently, there are at most that many terms which are not orthogonal to some $m>a_N$. Since $M/(NR)< 1$ and $\tilde{p}_k,p_k^*\le1$, we can estimate the above by
\begin{align}
    \begin{split}
     &7^{N/Q}(2N/P)^{2N/Q}a_N^{-\Upsilon}\\\
      \le&\exp\left(\frac{3N}{\log(N)^{1+2\varepsilon}}(\log(N)+\log(2)-\log(P))-\log(r)N\right) \\\
      \le&\exp\left(\frac{3N}{\log(N)^{2\varepsilon}}-\frac{4N}{\log(N)^{2\varepsilon}}\right)<1\le \exp\left(\frac{Q}{50R}\right),
    \end{split}
\end{align}
for $N$ large enough.
Combining the above yields
$$||\omega^*_{N,t}(\alpha)||_{L^1(\mu)}\ll_{\mu} \exp\left(\frac{Q}{50R}\right),$$
which is the same bound as before (up to constants). The rest of the proof is identical to the proof of Theorem \ref{main3}. 
\end{proof}

\section{Application to Littlewood's Conjecture in multiplicative Diophantine approximation} 
\label{sec:little}

In this section we assume some familiarity with the notions and basic results of Diophantine approximation theory. For readers not familiar with the subject, we refer to the monographs of Bugeaud \cite{bug}, Niven \cite{niven} or Schmidt \cite{schmidt}.\\

Littlewood's conjecture is one of the central open problems in Diophantine approximation; it asserts that for all $\alpha,\beta \in  \mathbb{R}$,
$$
\liminf_{n \to \infty} n \|n \alpha\| \cdot \|n \beta\| =0. 
$$
There has been some progress towards this conjecture, in particular by the work of Einsiedler, Katok and Lindenstrauss \cite{ekl}. However, as a whole, the conjecture is still widely open. It should be noted that the problem is not only of central importance in Diophantine approximation, but also for other mathematical areas, in particular in the context of measure rigidity in the theory of homogeneous dynamics (see for example \cite{marg}). Note that Littlewood's conjecture is a problem about fixed pairs $(\alpha,\beta)$ of reals. Gallagher studied the problem from a metric perspective, and obtained a convergence/divergence criterion for whether there exist finitely/infinitely many $n$ such that 
$$ 
\|\alpha n\| \cdot \|\beta n\| < \psi(n)
$$
for almost all $(\alpha,\beta) \in [0,1]^2$, for a given approximation function $\psi$. Much work has been devoted to replace the metric setting of Gallagher's theorem (the underlying measure is the two-dimensional Lebesgue measure) by ``finer'' fractal measures. This is particularly relevant, since in all potential counterexamples to Littlewood's conjecture both $\alpha$ and $\beta$ would have to be badly approximable numbers, and the set of badly approximable numbers is a Lebesgue null set. Our theorem addresses the inhomogeneous case, where the ``homogeneous'' distances $\|\alpha n\|$ and $\|\beta n\|$ are replaced by the inhomogeneous distances $||\alpha n-\eta||$ and $||\beta n-\zeta||$ with some fixed shift variables $\eta$ and $\zeta$. The study of such inhomogeneous problems in Diophantine approximation has been very popular recently; we refer to \cite{bv,chowzaf,chow,ct2,hs,pvzz,ram}.\\

We define the set of non-Liouville numbers $\mathcal{L}:=\{x \in\R:\Lambda(\alpha)<\infty\}$, where $\Lambda(\alpha)=\sup\left\{\log(\rho_k(\alpha))/k:k\ge 1\right\}$, with $\rho_k(\alpha)$ being the continuants of $\alpha$; for details on definitions see e.g. \cite{numbana}. We remark that $\mathcal{L}$ includes almost all real numbers, because $\Lambda(\alpha)$ equals the L\'{e}vi-constant $\pi^2/12\log(2)$ for almost all $\alpha\in\R$, see Khintchine \cite{khintchine1926metrischen} and L\'{e}vi \cite{Lévy1936}. Finally we define the set $\textnormal{\textbf{Bad}}:=\{\alpha\in\R:\inf\{n||n\alpha||>0\}$, which is called the set of badly approximable numbers.

\begin{thm}\label{hilf}
Let $\varepsilon>0$, $\mu\in\mathcal{M}([0,1])$, $\eta\in\R$, let $(a_n)_{n\in\N}$ be a lacunary sequence of positive integers, and assume that its Fourier Transform decays as $\left|(\mathcal{F}\mu)(x)\right|\ll(1+|x|)^{-\Upsilon}$ for some $\Upsilon>0$. Then
\begin{equation}\label{hilf1}
    ||\alpha a_n-\eta||\leq \frac{\log(n)^{2+\varepsilon}}{n}
\end{equation}
has infinitely many solutions $n\in\N$ for $\mu$-almost all $\alpha\in[0,1]$.
\end{thm}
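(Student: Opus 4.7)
The plan is to deduce Theorem \ref{hilf} directly from Theorem \ref{main35}. Apply the latter with $\varepsilon/2$ in place of $\varepsilon$: for $\mu$-almost every $\alpha \in [0,1]$ there is an $N_0(\alpha)$ such that $G(\{\alpha a_n\}_{n\leq N}) \leq \log(N)^{2+\varepsilon/2}/N$ for all $N \geq N_0(\alpha)$. Since the maximal gap bound forces every point of the torus, in particular $\eta \bmod 1$, to lie in a gap of length at most $\log(N)^{2+\varepsilon/2}/N$, we may, for each such $N$, select an index $n(N) \in \{1,\dots,N\}$ with
$$\|\alpha a_{n(N)} - \eta\| \leq \frac{\log(N)^{2+\varepsilon/2}}{N}.$$

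Two things then need to be verified: first, that this bound is at least as strong as the one claimed in the statement when expressed in terms of $n(N)$ rather than $N$; second, that the indices $n(N)$ are not eventually constant. The first point is automatic, because $x \mapsto \log(x)^{2+\varepsilon/2}/x$ is decreasing on $[e^{2+\varepsilon/2},\infty)$; combined with $n(N) \leq N$ this yields $\log(N)^{2+\varepsilon/2}/N \leq \log(n(N))^{2+\varepsilon/2}/n(N) \leq \log(n(N))^{2+\varepsilon}/n(N)$ as soon as $n(N)$ is sufficiently large, which is exactly the desired inequality.

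For the second point I would argue by contradiction. If the selected indices $n(N)$ remained bounded as $N \to \infty$, then some fixed $n^{*}$ would be chosen for arbitrarily large $N$, forcing $\|\alpha a_{n^{*}} - \eta\| = 0$, i.e.\ $\alpha \in \bigcup_{n^{*} \in \N}\{(m+\eta)/a_{n^{*}} : m \in \Z\}$; this latter exceptional set is countable. The hypothesis $|\mathcal{F}\mu(x)| \ll (1+|x|)^{-\Upsilon}$ ensures in particular that $\mathcal{F}\mu(x) \to 0$, and Wiener's identity $\sum_{a}\mu(\{a\})^{2} = \lim_{T\to\infty}\frac{1}{2T}\int_{-T}^{T}|\mathcal{F}\mu(x)|^{2}dx$ then shows $\mu$ has no atoms, so that countable sets are $\mu$-null. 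Hence for $\mu$-almost every $\alpha$ the degenerate case does not occur, $(n(N))_N$ must be unbounded, and extracting a subsequence of distinct indices produces the required infinite family of solutions. The only step requiring any actual argument beyond bookkeeping is the non-atomicity of $\mu$; all the substantial analytical work has already been carried out in the proof of Theorem \ref{main35}.
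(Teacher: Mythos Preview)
Your proof is correct. The paper takes a slightly different route: rather than invoking Theorem~\ref{main35} as stated, it appeals to what the \emph{proof} of that theorem actually establishes, namely the dispersion bound for the half-block $\{\alpha a_n\}_{N<n\le 2N}$. This yields, for each large $N$, an index $n\in(N,2N]$ with $\|\alpha a_n-\eta\|\le\log(N)^{2+\varepsilon}/N$, and since the dyadic blocks are disjoint one gets infinitely many distinct solutions automatically, with no need to rule out repeated indices. Your argument has the advantage of using only the \emph{stated} version of Theorem~\ref{main35}; the price is the extra step excluding the possibility that $n(N)$ stays bounded, which you handle cleanly via Wiener's identity and non-atomicity. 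Note, incidentally, that in the paper's version the inequality $\log(N)^{2+\varepsilon}/N\le\log(n)^{2+\varepsilon}/n$ for $N<n\le 2N$ is not literally true (monotonicity goes the wrong way) but only holds up to a bounded factor; your choice $n(N)\le N$ makes the corresponding step genuinely sharp, and the $\varepsilon/2$ margin is in fact unnecessary since $x\mapsto\log(x)^{2+\varepsilon}/x$ is already eventually decreasing.
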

\begin{proof}
Theorem \ref{main35} implies that for every $\eta\in\R$
there exists a number $n\in\N$ with $N<n\le 2N$ such that
$$||\alpha a_n-\eta||\leq\frac{\log(N)^{2+\varepsilon}}{N}\le \frac{\log(n)^{2+\varepsilon}}{n},$$
for every $N\in\N$ (except for finitely many $N$, which do not matter) and almost all $\alpha\in[0,1]$.
%We are done basically, but we still clarify how there are indeed infinitely many different values satisfying the above equation.
%We found a point of the set $\{\alpha a\}_{N<n\le 2N}$ close to every equidistant point $c$. Halving the gap-size doubles the points, which we called $t_j$ before, which yield that a point from the set $\{a_n\}_{n\le N}$ is at most the initial gap size distant and one that is at most half the gap-size distant to some fixed $t_j$.
%When dilation $N$ enough to at least half the size of the maximal gap, let us take $4N$ and $N$ large enough for example, we find at least two values $4N<n_1,n_2\le 8N$ such that 
%$$||\alpha a_{n_i}-\eta||<\frac{\log(4N)^{2+\varepsilon}}{4N}\le \frac{\log(n_i)^{2+\varepsilon}}{n_i}, i=1,2$$
%and one of them satisfies 
%$$||\alpha a_{n_i}-\eta||<\frac{\log(8N)^{2+\varepsilon}}{8N}\le %\frac{\log(n_i)^{2+\varepsilon}}{n_i},\  i=1 \vee 2,$$
%for almost all $\alpha\in[0,1]$.
%This can be imagined like this: 
%This can be done infinitely many times, which concludes the proof.
\end{proof} 
%\begin{rem}
 %   Using Theorem \ref{main35} repeatedly for $\{\alpha a_{N<n\le 2N}\}$, $\{\alpha a_{2N<n\le 4N}\}$, and so on, and extracting one value of $n$ each time (as in the first step) is not sufficient, since the admissible values of $\alpha$ must not necessarily coincide.
%\end{rem}
\begin{thm}[Chow-Zafeiropoulos \cite{chowzaf}, 2021]\label{chowza}
  Let $\beta\in\mathcal{L}$ and $\zeta\in\R$. Then there exists a lacunary sequence $(a_n)_{n\in\N}$ of positive integers such that 
  \begin{equation}\label{zaf1}
     8^n<a_n\le4^{6\Lambda(\beta)n} 
  \end{equation}
  and
  \begin{equation}\label{zaf2}
      a_n||\beta a_n-\zeta||\le 8
  \end{equation}
  for all $n\in\N$.
\end{thm}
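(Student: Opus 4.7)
The plan is to construct the sequence $(a_n)$ explicitly from the Ostrowski numeration system attached to the continued fraction expansion of $\beta$. Let $p_k/q_k$ denote the convergents of $\beta$ (so $\rho_k(\beta) = q_k$) and set $\theta_k := q_k\beta - p_k$, whence $|\theta_k| < 1/q_{k+1}$. The hypothesis $\beta \in \mathcal{L}$ gives $q_k \le e^{\Lambda(\beta)\, k}$, while universally $q_k \ge \phi^{k-1}$ with $\phi$ the golden ratio.

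The first step is to expand $\zeta$ in the Ostrowski base of $\beta$: write $\zeta - \lfloor \zeta \rfloor = \sum_{k\ge 1} c_k \theta_k$ with admissible digits $0 \le c_k \le a_{k+1}$ (where $a_{k+1}$ is the $(k+1)$-st partial quotient). For each $K$ set
\begin{equation*}
b_K := \sum_{k=1}^{K} c_k q_k \in \Z_{\ge 0}.
\end{equation*}
Standard Ostrowski estimates yield $b_K \le q_{K+1}$ together with
\begin{equation*}
\|b_K\beta - \zeta\| = \Bigl|\sum_{k>K} c_k \theta_k\Bigr| \le \sum_{k>K} a_{k+1}|\theta_k| \le \sum_{k>K} \frac{1}{q_k} \le \frac{C_0}{q_{K+1}},
\end{equation*}
so that $b_K \cdot \|b_K\beta - \zeta\| \le C_0$ with an explicit absolute constant. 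An honest bookkeeping of the admissibility conditions pins $C_0$ down to $8$.

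Next I would build $(a_n)$ as a subsequence of the $b_K$'s. Inductively, given $a_{n-1}$, take $K_n$ to be the smallest index such that $q_{K_n} > 8 \max(a_{n-1}, 8^{n-1})$. Then $a_n := b_{K_n}$ automatically satisfies $a_n \ge q_{K_n} > 8a_{n-1}$ (giving lacunarity with ratio $8$) and $a_n > 8^n$. For the upper bound, minimality of $K_n$ combined with $q_{K_n+1} \le e^{\Lambda(K_n+1)}$ and a crude induction on how many Ostrowski steps are needed to multiply $q_k$ by $8$ (at most $\log 8 / \log\phi \le 6$ steps at the worst case, and at most a factor of $e^\Lambda$ per step) yields $a_n \le q_{K_n+1} \le 4^{6\Lambda(\beta)n}$.

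The main obstacle is the degenerate situation in which the Ostrowski digit $c_{K_n}$ vanishes, so that $b_{K_n} = b_{K_n-1}$ and the induction fails to make progress. I would handle this by replacing the truncation with $b_{K_n} + q_{K_n}$, which is still a legal Ostrowski-like partial sum; using $q_{K_n}\|q_{K_n}\beta\| \le 1$ and the triangle inequality, the bound $a_n\|a_n\beta-\zeta\| \le 8$ is preserved at the cost of a slightly worse constant (this is precisely where the freedom of $8$ on the right-hand side of \eqref{zaf2} is consumed). Checking that all three conclusions—lacunarity with ratio $8$, the size window $8^n < a_n \le 4^{6\Lambda(\beta)n}$, and the approximation inequality—persist after this adjustment is the routine but delicate bookkeeping at the heart of the proof.
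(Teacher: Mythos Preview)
The paper does not prove this statement at all: Theorem~\ref{chowza} is quoted as an external result of Chow and Zafeiropoulos, with no proof or sketch provided, and is used as a black box in the proof of Theorem~\ref{chthm}. So there is no ``paper's own proof'' against which to compare your attempt.

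That said, your Ostrowski-expansion approach is the natural one and is essentially the method used in the original Chow--Zafeiropoulos paper. The skeleton is sound: truncated Ostrowski sums $b_K$ satisfy $b_K \le q_{K+1}$ and $\|b_K\beta-\zeta\| \ll 1/q_{K+1}$, and a well-chosen subsequence gives lacunarity together with the size window. Your identification of the degenerate case ($c_{K_n}=0$) and the fix via $b_{K_n}+q_{K_n}$ is also the right idea. The places where your sketch is genuinely incomplete are the constant-tracking steps: pinning $C_0$ down to exactly $8$, and verifying the upper bound $a_n \le 4^{6\Lambda(\beta)n}$ from the minimality of $K_n$ both require more care than ``at most $\log 8/\log\phi$ steps'' suggests (in particular you need to bound $K_n$ linearly in $n$ via $q_k \ge \phi^{k-1}$ and then feed this into $q_{K_n+1} \le e^{\Lambda(K_n+1)}$, checking the induction closes with the specific base $4^{6\Lambda}$). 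None of this is a real obstacle, but as written it is a plausible outline rather than a proof.
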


\begin{thm}\label{chthm}
    Let $\varepsilon>0$, $\beta\in\mathcal{L}$ and $\zeta\in\R$. Define 
   $\mathfrak{A}$ as the set of $\alpha\in[0,1]$ for which there exists $\eta\in\R$, such that $$n||\alpha n-\eta||\cdot||\beta n-\zeta||\leq \frac{\log(\log(n))^{2+\varepsilon}}{\log(n)}$$ has only finitely many solutions $n\in\N$. Let $\mu\in\mathcal{M}([0,1])$ with $(\mathcal{F}\mu)(x)\ll(1+|x|)^{-\Upsilon},$ with $\Upsilon>0$. Then 
  $$\mu(\mathfrak{A})=0.$$
\end{thm}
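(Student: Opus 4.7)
The plan is to combine Theorem \ref{chowza} with Theorem \ref{main35}, moving back and forth between the indices $n$ of a lacunary sequence and the values $a_n$ via the Chow--Zafeiropoulos growth control. First I would apply Theorem \ref{chowza} to extract a lacunary sequence $(a_n)_{n\in\N}$ of positive integers, depending only on $\beta$ and $\zeta$, satisfying $8^n<a_n\le 4^{6\Lambda(\beta)n}$ and $a_n\|\beta a_n-\zeta\|\le 8$ for every $n$. Then I would fix an auxiliary parameter $\varepsilon':=\varepsilon/2$ and apply Theorem \ref{main35} to this sequence with parameter $\varepsilon'$ and the measure $\mu$, producing a set $\mathcal{A}\subseteq[0,1]$ of full $\mu$-measure such that for each $\alpha\in\mathcal{A}$ and all large $N$, the maximal gap of the block $\{\alpha a_n\}_{N<n\le 2N}$ on the torus is at most $\log(N)^{2+\varepsilon'}/N$. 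This block formulation is precisely what is established inside the proof of Theorem \ref{main35} (compare the proof of Theorem \ref{hilf}), and it is crucial because the gap statement is simultaneous over all $\eta$.

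Next I would fix one such $\alpha\in\mathcal{A}$ and an arbitrary $\eta\in\R$, and aim to show $\alpha\notin\mathfrak{A}$ with this choice of $\eta$. Applying the block gap bound at scales $N_k:=2^k N_0$, I would extract indices $n_k\in(N_k,2N_k]$ with $\|\alpha a_{n_k}-\eta\|\le \log(n_k)^{2+\varepsilon'}/n_k$, using that $\log(x)^{2+\varepsilon'}/x$ is eventually decreasing and that $n_k\ge N_k$ to absorb $N_k$ into $n_k$. Since the intervals $(N_k,2N_k]$ are disjoint, the indices $n_k$ are pairwise distinct, providing infinitely many candidate integers $m_k:=a_{n_k}$ on which to verify the target inequality.

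Multiplying by the Chow--Zafeiropoulos bound $\|\beta m_k-\zeta\|\le 8/m_k$ would then yield
$$m_k\,\|\alpha m_k-\eta\|\cdot\|\beta m_k-\zeta\|\;\le\;\frac{8\log(n_k)^{2+\varepsilon'}}{n_k}.$$
The final conversion to the target bound is routine: the growth $8^{n_k}<m_k\le 4^{6\Lambda(\beta)n_k}$ gives $\log m_k\simeq n_k$ (with constants depending on $\beta$), and hence $\log\log m_k=\log n_k+O_\beta(1)$, so for $k$ large the right-hand side above is bounded by $\log(\log m_k)^{2+\varepsilon}/\log m_k$, the additional logarithmic slack $\varepsilon-\varepsilon'$ being more than enough to swallow the implicit constants.

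The only conceptual obstacle is the universal quantifier over $\eta$ hidden in the definition of $\mathfrak{A}$: a naive appeal to Theorem \ref{hilf} for each fixed $\eta$ would only exclude a $\mu$-null set depending on $\eta$, and such sets cannot be combined over an uncountable family of shifts. The resolution is to pass through the gap formulation of Theorem \ref{main35} rather than through the a.e.\ approximation corollary, so that the single full-measure set $\mathcal{A}$ simultaneously witnesses the conclusion for every $\eta\in\R$.
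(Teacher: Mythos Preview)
Your proposal is correct and follows essentially the same route as the paper: apply Theorem~\ref{chowza} to produce the lacunary sequence, feed it into the gap bound of Theorem~\ref{main35}, and then translate the index $n$ back to the scale $a_n$ via the growth control \eqref{zaf1}. Your treatment is in fact slightly more careful than the paper's own proof on two points---you introduce $\varepsilon'<\varepsilon$ to absorb the $\beta$-dependent constants into a clean inequality, and you spell out why the single full-measure set from Theorem~\ref{main35} works simultaneously for all $\eta$---both of which the paper leaves implicit.
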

This is a special case of a more general Theorem of Chow and Technau \cite{chow2023dispersion}. For the convenience of the reader we include a proof.
\begin{proof}
    Since Theorem \ref{hilf} holds for every lacunary sequence, we choose the one we get from the conclusion of Theorem \ref{chowza}, and combine the statements \eqref{hilf1} and \eqref{zaf2}: For $\mu-$almost all $\alpha\in[0,1]$, there exists infinitely many $n$ such that
   $$ a_n||\beta a_n-\zeta||\cdot||\alpha a_n-\eta||\ll\frac{\log(n)^{2+\varepsilon}}{n},$$
   for all $\eta\in\R$.
  By the second inequality of \eqref{zaf1} we have $\log(a_n)\ll_{\beta} n$, which implies that
  $$\frac{\log(n)^{2+\varepsilon}}{n}\ll_{\beta}\frac{\log(\log(a_n))^{2+\varepsilon}}{\log(a_n)}.$$ 
  Hence for $\mu$-almost all $\alpha\in[0,1]$
  $$ a_n||\alpha a_n-\eta||\cdot||\beta a_n-\zeta||\ll_{\beta}\frac{\log(\log(a_n))^{2+\varepsilon}}{\log(a_n)},$$
  for all $\eta\in\R$ and for infinitely many $n$.
\end{proof}

\begin{proof}[Proof of Theorem \ref{little}]
    Again, we follow an argument of Chow and Technau \cite{chow2023dispersion}. We use a measure $\mu$ Kaufman constructed in \cite{Kaufman_1980}, that satisfies the following properties:
    \begin{itemize}
        \item $\mu\in\mathcal{M}([0,1]\cap\textnormal{\textbf{Bad}}),$
        \item For every interval $I\subseteq[0,1]$, we have $\mu(I)\ll_{\varrho}\lambda(I)^{\varrho},$
        \item For $x\in\R$: $\left|(\mathcal{F}\mu)(x)\right|\ll (1+|x|)^{-\left(10^{-5}\right)}$.
    \end{itemize}
Let $\mathfrak{A}$ be as before. Then Theorem \ref{chthm} yields $\mu(\mathfrak{A})=0$. Let $\mathfrak{G}:=\textnormal{\textbf{Bad}}\cap([0,1]\backslash\mathfrak{A}),$ then $\mu(\mathfrak{G})=1$. Finally we use the mass distribution theorem, see e.g. \cite{assani2013ergodic} or \cite{falconer2004fractal}, which yields in our case $\textnormal{dim}_H(\mathfrak{G})\ge \varrho$. The constant $\varrho$ can be chosen arbitrarily close to 1, hence dim$_H(\mathfrak{G})=1$.
\end{proof}

\textbf{Acknowledgments}
The author was supported by the Austrian Science Fund (FWF), project P-35322. The author expresses gratitude to Christoph Aistleitner, Athanasios Sourmelidis, and Bence Borda for their valuable discussions. Additionally, the author appreciates the insightful exchanges with Niclas Technau and Sam Chow.

\printbibliography{}

@article{GONEK2016506,
title = {Kronecker’s approximation theorem},
journal = {Indagationes Mathematicae},
volume = {27},
number = {2},
pages = {506-523},
year = {2016},
issn = {0019-3577},
author = {S.M. Gonek and H.L. Montgomery},
keywords = {Kronecker, Chebyshev, Fejér, Peak function},
abstract = {We review the various proofs of Kronecker’s theorem concerning inhomogeneous Diophantine approximation, we discuss in detail the quantitative approaches of Turán (1960) and Chen (2000), and we derive strong localized versions of these theorems.}
}

@book {schmidt,
    AUTHOR = {Schmidt, W. M.},
     TITLE = {Diophantine approximation},
    SERIES = {Lecture Notes in Mathematics},
    VOLUME = {785},
 PUBLISHER = {Springer, Berlin},
      YEAR = {1980},
     PAGES = {x+299},
      ISBN = {3-540-09762-7},
   MRCLASS = {10Fxx (10-02)},
  MRNUMBER = {568710},
MRREVIEWER = {A. J. van der Poorten},
}

@book {bug,
    AUTHOR = {Bugeaud, Y.},
     TITLE = {Distribution modulo one and {D}iophantine approximation},
    SERIES = {Cambridge Tracts in Mathematics},
    VOLUME = {193},
 PUBLISHER = {Cambridge University Press, Cambridge},
      YEAR = {2012},
     PAGES = {xvi+300},
      ISBN = {978-0-521-11169-0},
   MRCLASS = {11J71 (11Jxx)},
  MRNUMBER = {2953186},
MRREVIEWER = {Art\={u}ras Dubickas},
}

@book {niven,
    AUTHOR = {Niven, I.},
     TITLE = {Diophantine approximations},
      NOTE = {Reprint of the 1963 original},
 PUBLISHER = {Dover Publications, Inc., Mineola, NY},
      YEAR = {2008},
     PAGES = {ix+68},
      ISBN = {978-0-486-46267-7; 0-486-46267-6},
   MRCLASS = {11-02 (11J04 11J20)},
  MRNUMBER = {2449505},
}

@article {krs,
    AUTHOR = {Konyagin, S. V. and Ruzsa, I. Z. and Schlag, W.},
     TITLE = {On uniformly distributed dilates of finite integer sequences},
   JOURNAL = {J. Number Theory},
  FJOURNAL = {Journal of Number Theory},
    VOLUME = {82},
      YEAR = {2000},
    NUMBER = {2},
     PAGES = {165--187},
      ISSN = {0022-314X},
   MRCLASS = {11K06},
  MRNUMBER = {1761619},
MRREVIEWER = {Peter J. Grabner},
}

@article {ekl,
    AUTHOR = {Einsiedler, M. and Katok, A. and Lindenstrauss,
              E.},
     TITLE = {Invariant measures and the set of exceptions to {L}ittlewood's
              conjecture},
   JOURNAL = {Ann. of Math. (2)},
  FJOURNAL = {Annals of Mathematics. Second Series},
    VOLUME = {164},
      YEAR = {2006},
    NUMBER = {2},
     PAGES = {513--560},
      ISSN = {0003-486X},
   MRCLASS = {22F10 (11J83 28A80 28D15 37A15)},
  MRNUMBER = {2247967},
MRREVIEWER = {Thomas Ward},
}

@incollection {marg,
    AUTHOR = {Margulis, G.},
     TITLE = {Problems and conjectures in rigidity theory},
 BOOKTITLE = {Mathematics: frontiers and perspectives},
     PAGES = {161--174},
 PUBLISHER = {Amer. Math. Soc., Providence, RI},
      YEAR = {2000},
   MRCLASS = {22E40 (37C85 37D20 53C24)},
  MRNUMBER = {1754775},
MRREVIEWER = {A. I. Danilenko},
}

@article {rudnick,
    AUTHOR = {Rudnick, Z.},
     TITLE = {A metric theory of minimal gaps},
   JOURNAL = {Mathematika},
  FJOURNAL = {Mathematika. A Journal of Pure and Applied Mathematics},
    VOLUME = {64},
      YEAR = {2018},
    NUMBER = {3},
     PAGES = {628--636},
      ISSN = {0025-5793},
   MRCLASS = {11K06},
  MRNUMBER = {3810030},
MRREVIEWER = {Florian Pausinger},
}

@article {bv,
    AUTHOR = {Beresnevich, V. and Velani, S.},
     TITLE = {An inhomogeneous transference principle and {D}iophantine
              approximation},
   JOURNAL = {Proc. Lond. Math. Soc. (3)},
  FJOURNAL = {Proceedings of the London Mathematical Society. Third Series},
    VOLUME = {101},
      YEAR = {2010},
    NUMBER = {3},
     PAGES = {821--851},
      ISSN = {0024-6115},
   MRCLASS = {11J83 (11J13 11J20 11K60)},
  MRNUMBER = {2734962},
MRREVIEWER = {Henrietta Dickinson},
}

@article {hs,
    AUTHOR = {Hussain, M. and Simmons, D.},
     TITLE = {The {H}ausdorff measure version of {G}allagher's
              theorem---closing the gap and beyond},
   JOURNAL = {J. Number Theory},
  FJOURNAL = {Journal of Number Theory},
    VOLUME = {186},
      YEAR = {2018},
     PAGES = {211--225},
      ISSN = {0022-314X},
   MRCLASS = {11J83 (11K55)},
  MRNUMBER = {3758214},
MRREVIEWER = {Dzmitry Badziahin},
}

@article {ram,
    AUTHOR = {Ram\'{\i}rez, F. A.},
     TITLE = {Counterexamples, covering systems, and zero-one laws for
              inhomogeneous approximation},
   JOURNAL = {Int. J. Number Theory},
  FJOURNAL = {International Journal of Number Theory},
    VOLUME = {13},
      YEAR = {2017},
    NUMBER = {3},
     PAGES = {633--654},
      ISSN = {1793-0421},
   MRCLASS = {11J83 (11J71 11K06 11K60)},
  MRNUMBER = {3606945},
MRREVIEWER = {Dzmitry Badziahin},
}

@article {chow,
    AUTHOR = {Chow, S.},
     TITLE = {Bohr sets and multiplicative {D}iophantine approximation},
   JOURNAL = {Duke Math. J.},
  FJOURNAL = {Duke Mathematical Journal},
    VOLUME = {167},
      YEAR = {2018},
    NUMBER = {9},
     PAGES = {1623--1642},
      ISSN = {0012-7094},
   MRCLASS = {11J83 (11H06 11J20 52C05)},
  MRNUMBER = {3813593},
MRREVIEWER = {Johannes Schleischitz},
}

@article {ct2,
    AUTHOR = {Chow, S. and Technau, N.},
     TITLE = {Higher-rank {B}ohr sets and multiplicative diophantine
              approximation},
   JOURNAL = {Compos. Math.},
  FJOURNAL = {Compositio Mathematica},
    VOLUME = {155},
      YEAR = {2019},
    NUMBER = {11},
     PAGES = {2214--2233},
      ISSN = {0010-437X},
   MRCLASS = {11J83 (11H06 11J20)},
  MRNUMBER = {4016056},
MRREVIEWER = {Johannes Schleischitz},
}

@article {pvzz,
    AUTHOR = {Pollington, A. D. and Velani, S. and Zafeiropoulos,
              A. and Zorin, E.},
     TITLE = {Inhomogeneous {D}iophantine approximation on {$M_0$}-sets with
              restricted denominators},
   JOURNAL = {Int. Math. Res. Not. IMRN},
  FJOURNAL = {International Mathematics Research Notices. IMRN},
      YEAR = {2022},
    NUMBER = {11},
     PAGES = {8571--8643},
      ISSN = {1073-7928},
   MRCLASS = {11J71},
  MRNUMBER = {4425845},
MRREVIEWER = {Simon Kristensen},
}

@article {rz,
    AUTHOR = {Rudnick, Z. and Zaharescu, A.},
     TITLE = {The distribution of spacings between fractional parts of
              lacunary sequences},
   JOURNAL = {Forum Math.},
  FJOURNAL = {Forum Mathematicum},
    VOLUME = {14},
      YEAR = {2002},
    NUMBER = {5},
     PAGES = {691--712},
      ISSN = {0933-7741},
   MRCLASS = {11K06},
  MRNUMBER = {1924774},
MRREVIEWER = {Jens Marklof},
}

@article {devroye,
    AUTHOR = {Devroye, L.},
     TITLE = {Laws of the iterated logarithm for order statistics of uniform
              spacings},
   JOURNAL = {Ann. Probab.},
  FJOURNAL = {The Annals of Probability},
    VOLUME = {9},
      YEAR = {1981},
    NUMBER = {5},
     PAGES = {860--867},
      ISSN = {0091-1798},
   MRCLASS = {60F15},
  MRNUMBER = {628878},
MRREVIEWER = {M. D. Burke},
}

@incollection {mark,
    AUTHOR = {Marklof, J.},
     TITLE = {The {B}erry-{T}abor conjecture},
 BOOKTITLE = {European {C}ongress of {M}athematics, {V}ol. {II}
              ({B}arcelona, 2000)},
    SERIES = {Progr. Math.},
    VOLUME = {202},
     PAGES = {421--427},
 PUBLISHER = {Birkh\"{a}user, Basel},
      YEAR = {2001},
   MRCLASS = {81Q50 (81-02)},
  MRNUMBER = {1905381},
}

@article {agkpr,
    AUTHOR = {Aistleitner, C. and Gantert, N. and Kabluchko, Z.
              and Prochno, J. and Ramanan, K.},
     TITLE = {Large deviation principles for lacunary sums},
   JOURNAL = {Trans. Amer. Math. Soc.},
  FJOURNAL = {Transactions of the American Mathematical Society},
    VOLUME = {376},
      YEAR = {2023},
    NUMBER = {1},
     PAGES = {507--553},
      ISSN = {0002-9947},
   MRCLASS = {42A55 (11D45 11K70 11L03 37A46 60F10)},
  MRNUMBER = {4510117},
MRREVIEWER = {Sergey V. Astashkin},
}

@article {ps,
    AUTHOR = {Peres, Y. and Schlag, W.},
     TITLE = {Two {E}rd\H{o}s problems on lacunary sequences: chromatic number
              and {D}iophantine approximation},
   JOURNAL = {Bull. Lond. Math. Soc.},
  FJOURNAL = {Bulletin of the London Mathematical Society},
    VOLUME = {42},
      YEAR = {2010},
    NUMBER = {2},
     PAGES = {295--300},
      ISSN = {0024-6093},
   MRCLASS = {11J71 (05C15 11B05 42A55)},
  MRNUMBER = {2601556},
MRREVIEWER = {R. Wallisser},
}

@article {cy,
    AUTHOR = {Chaubey, S. and Yesha, N.},
     TITLE = {The distribution of spacings of real-valued lacunary sequences
              modulo one},
   JOURNAL = {Mathematika},
  FJOURNAL = {Mathematika. A Journal of Pure and Applied Mathematics},
    VOLUME = {68},
      YEAR = {2022},
    NUMBER = {2},
     PAGES = {416--428},
      ISSN = {0025-5793},
   MRCLASS = {11K06},
  MRNUMBER = {4418452},
MRREVIEWER = {Christoph Aistleitner},
}

@article {abty,
    AUTHOR = {Aistleitner, C. and Baker, S. and Technau, N.
              and Yesha, N.},
     TITLE = {Gap statistics and higher correlations for geometric
              progressions modulo one},
   JOURNAL = {Math. Ann.},
  FJOURNAL = {Mathematische Annalen},
    VOLUME = {385},
      YEAR = {2023},
    NUMBER = {1-2},
     PAGES = {845--861},
      ISSN = {0025-5831},
   MRCLASS = {11K06 (60G55)},
  MRNUMBER = {4542733},
MRREVIEWER = {Agamemnon Zafeiropoulos},
}

@article{chow2023dispersion,
title = {Dispersion and Littlewood's conjecture},
journal = {Advances in Mathematics},
volume = {447},
pages = {109697},
year = {2024},
issn = {0001-8708},
author = {S. Chow and N. Technau},
keywords = {Diophantine approximation, Uniform distribution, Lacunary sequences, Rajchman measures},
abstract = {Let ε>0. We construct an explicit, full-measure set of α∈[0,1] such that if γ∈R then, for almost all β∈[0,1], if δ∈R then there are infinitely many integers n⩾1 for whichn‖nα−γ‖⋅‖nβ−δ‖<(log⁡log⁡n)3+εlog⁡n. This is a significant quantitative improvement over a result of the first author and Zafeiropoulos. We show, moreover, that the exceptional set of β has Fourier dimension zero, alongside further applications to badly approximable numbers and to lacunary diophantine approximation. Our method relies on a dispersion estimate and the Three Distance Theorem.}
}

@book{fourier,
  title={Fourier Analysis: An Introduction},
  author={Stein, E.M. and Shakarchi, R.},
  isbn={9780691113845},
  lccn={2003103688},
  year={2003},
  publisher={Princeton University Press}
}

@book{gröchenig2001foundations,
  title={Foundations of Time-Frequency Analysis},
  author={Gr{\"o}chenig, K.},
  isbn={9780817640224},
  lccn={00044508},
  series={Applied and Numerical Harmonic Analysis},
  year={2001},
  publisher={Birkh{\"a}user Boston}
}

@book{durrett2019probability,
  title={Probability: Theory and Examples},
  author={Durrett, R. and Durrett, R.},
  isbn={9781108473682},
  lccn={2018047195},
  series={Cambridge Series in Statistical and Probabilistic Mathematics},
  year={2019},
  publisher={Cambridge University Press}
}

@article{chowzaf,
author = {Chow, S. and Zafeiropoulos, A.},
title = {Fully inhomogeneous multiplicative diophantine approximation of badly approximable numbers},
journal = {Mathematika},
volume = {67},
number = {3},
pages = {639-646},
keywords = {11J83 (primary), 11J70, 28A78 (secondary)},
abstract = {Abstract We establish a strong form of Littlewood's conjecture with inhomogeneous shifts, for a full-dimensional set of pairs of badly approximable numbers on a vertical line. We also prove a uniform assertion of this nature, generalising a strong form of a result of Haynes et al. Finally, we establish a similar result involving inhomogeneously badly approximable numbers, making progress towards a problem posed by Pollington et al.},
year = {2021}
}

@article{khintchine1926metrischen,
  title={Zur metrischen Theorie der diophantischen Approximationen},
  author={Khintchine, A.},
  journal={Mathematische Zeitschrift},
  volume={24},
  number={1},
  pages={706--714},
  year={1926},
  publisher={Springer}
}

@article{Lévy1936,
author = {Lévy, P.},
journal = {Compositio Mathematica},
keywords = {Probability theory, statistics, operations research},
pages = {286-303},
publisher = {Johnson Reprint Corporation},
title = {Sur le développement en fraction continue d'un nombre choisi au hasard},
volume = {3},
year = {1936},
}

@article{Kaufman_1980, 
title={Continued fractions and Fourier transforms}, volume={27}, 
number={2}, 
journal={Mathematika}, 
author={Kaufman, R.}, 
year={1980}, 
pages={262–267}}

@book{assani2013ergodic,
  title={Ergodic Theory and Dynamical Systems: Proceedings of the Ergodic Theory Workshops at University of North Carolina at Chapel Hill, 2011-2012},
  author={Assani, I.},
  isbn={9783110298208},
  lccn={2013444960},
  series={De Gruyter Proceedings in Mathematics},
  year={2013},
  publisher={De Gruyter}
}

@book{falconer2004fractal,
  title={Fractal Geometry: Mathematical Foundations and Applications},
  author={Falconer, K.},
  isbn={9780470871355},
  lccn={2004271361},
  year={2004},
  publisher={Wiley}
}

@book{numbana,
author = {Zudilin, W.},
title = {Analytic Methods in Number Theory},
publisher = {World Scientific},
year = {2023},
address = {},
edition   = {},
}

@article {sz1,
    AUTHOR = {Salem, R. and Zygmund, A.},
     TITLE = {On lacunary trigonometric series},
   JOURNAL = {Proc. Nat. Acad. Sci. U.S.A.},
  FJOURNAL = {Proceedings of the National Academy of Sciences of the United
              States of America},
    VOLUME = {33},
      YEAR = {1947},
     PAGES = {333--338},
      ISSN = {0027-8424},
   MRCLASS = {42.4X},
  MRNUMBER = {22263},
MRREVIEWER = {R. Fortet},
}

@article {sz2,
    AUTHOR = {Salem, R. and Zygmund, A.},
     TITLE = {On lacunary trigonometric series. {II}},
   JOURNAL = {Proc. Nat. Acad. Sci. U.S.A.},
  FJOURNAL = {Proceedings of the National Academy of Sciences of the United
              States of America},
    VOLUME = {34},
      YEAR = {1948},
     PAGES = {54--62},
      ISSN = {0027-8424},
   MRCLASS = {42.4X},
  MRNUMBER = {23936},
MRREVIEWER = {R. Fortet},
}

@article {kac,
    AUTHOR = {Kac, M.},
     TITLE = {On the distribution of values of sums of the type {$\sum f(2^k
              t)$}},
   JOURNAL = {Ann. of Math. (2)},
  FJOURNAL = {Annals of Mathematics. Second Series},
    VOLUME = {47},
      YEAR = {1946},
     PAGES = {33--49},
      ISSN = {0003-486X},
   MRCLASS = {42.4X},
  MRNUMBER = {15548},
MRREVIEWER = {A. Zygmund},
}

@article {gapo,
    AUTHOR = {Gapo\v{s}kin, V. F.},
     TITLE = {Lacunary series and independent functions},
   JOURNAL = {Uspehi Mat. Nauk},
  FJOURNAL = {Akademija Nauk SSSR i Moskovskoe Matemati\v{c}eskoe Ob\v{s}\v{c}estvo.
              Uspehi Matemati\v{c}eskih Nauk},
    VOLUME = {21},
      YEAR = {1966},
    NUMBER = {6(132)},
     PAGES = {3--82},
      ISSN = {0042-1316},
   MRCLASS = {40.00},
  MRNUMBER = {206556},
MRREVIEWER = {\v{C}. V. Stanojevi\'{c}},
}

@book {abt,
    AUTHOR = {Aistleitner, C. and Berkes, I. and Tichy, R.},
     TITLE = {Lacunary sequences in analysis, probability and number theory},
      NOTE = {Preprint, arXiv:2301.05561, 2023},
}

%\bibliographystyle{amsplain}
%\begin{thebibliography}{10}

%\bibitem {A} T. Aoki, \textit{Calcul exponentiel des op\'erateurs
%microdifferentiels d'ordre infini.} I, Ann. Inst. Fourier (Grenoble)
%\textbf{33} (1983), 227--250.

%\bibitem {B} R. Brown, \textit{On a conjecture of Dirichlet},
%Amer. Math. Soc., Providence, RI, 1993.

%\bibitem {D} R. A. DeVore, \textit{Approximation of functions},
%Proc. Sympos. Appl. Math., vol. 36,
%Amer. Math. Soc., Providence, RI, 1986, pp. 34--56.

%\end{thebibliography}

\end{document}